\documentclass{amsart}
\usepackage{amsthm}
\usepackage{amsmath}
\usepackage{amssymb}
\usepackage[all]{xy}
\usepackage{enumerate}

\theoremstyle{plain}
\newtheorem{thm}{Theorem}[section]
\newtheorem{lemma}[thm]{Lemma}
\newtheorem{cor}[thm]{Corollary}
\newtheorem{prop}[thm]{Proposition}

\theoremstyle{definition}
\newtheorem{defn}[thm]{Definition}
\newtheorem{remark}[thm]{Remark}
\newtheorem{ex}[thm]{Example}

\begin{document}

\title{Module Invariants and Blocks of Finite Group Schemes}

\author[ Paul Sobaje]
{Paul Sobaje}

\begin{abstract}

\sloppy{
We investigate various topological spaces and varieties which can be associated to a block of a finite group scheme $G$.  These spaces come from the theory of cohomological support varieties for modules, as well as from the representation-theoretic constructions of E. Friedlander and J. Pevtsova.
}

\end{abstract}

\subjclass[2010]{16E40, 16T99}

\maketitle

Let $G$ be a finite group scheme over an algebraically closed field $k$ of characteristic $p > 0$, and let $k[G]$ denote the coordinate algebra (or representing algebra) of $G$.  The algebra $k[G]$ is a finite dimensional commutative Hopf algebra, and representations of $G$ are equivalent to right co-modules for $k[G]$, which in turn are equivalent to left modules of the ``group algebra" $kG := \text{Hom}_k(k[G],k)$.  As $kG$ is finite dimensional, it can be decomposed uniquely as an algebra into the direct product of its indecomposable two-sided ideals called the blocks of $kG$.  Any $kG$-module breaks up as the direct sum of modules which lie in a block, reducing the study of $kG$-mod to the study of the module categories of its blocks.

In the case that $G$ is a finite group, the representation theory of a block is governed to a certain extent by its defect group, which is a particular $p$-subgroup of $G$ unique up to conjugacy.  Thus it would seem desirable to study the blocks of a finite group scheme by adapting the theory of defect groups to this more general setting, with unipotent subgroup schemes filling the role of $p$-subgroups.  However, key features of $p$-subgroups not shared by arbitrary unipotent subgroup schemes seem to stand in the way of making such a generalization.  Another obstacle is presented by the work of R. Farnsteiner and A. Skowro\'{n}ski in \cite{FSk}, where they prove that for an arbitrary finite group scheme, the principal block is not always the block with the most complicated representation type.  This is a divergence from the well-known situation for group algebras of finite groups, and gives some indication that extending the theory of defect groups to all finite group schemes either might not be possible, or at the very least will behave rather differently.

Even without a generalized defect theory, developments over the last several years in the theory of support varieties for modules of finite group schemes has led to new means by which blocks can be studied.  Fundamental to the considerations in this paper is the work of E. Friedlander and J. Pevtsova in \cite{FP1}, in which they defined for a finite group scheme $G$ the ``representation-theoretic" topological space of $p$-points $P(G)$, and proved that this space is homeomorphic to the projective variety of the cohomology ring $\text{H}^{\bullet}(G,k)$.  Their work not only extended to modules of finite group schemes many of the properties of support varieties known to be true for modules of finite groups, but the creation of $p$-points has also led to new and interesting module invariants (see, for example, \cite{CFP}, \cite{FP2}, \cite{FP4}, and \cite{FPS}).  As it relates to blocks, one important application of \cite{FP1} was given by Farnsteiner, who proved in \cite{Far2} that support varieties can be used to determine if a block has wild representation type.  Specifically, let $V_G(\mathcal{B})$ denote the union of the support varieties of all simple modules lying in a block $B$.  Farnsteiner proved that if $\text{dim} V_G(\mathcal{B}) \ge 3$, then $B$ has wild representation type.

In this paper we make some investigations into various support spaces of blocks.  After quickly recalling relevant background material and setting our notation in the first section, in Section 2 we consider for a block $B$ the variety $V_G(\mathcal{B})$ as defined above.  We show in Theorem \ref{same} that by taking $B$ to be a $kG$-module under the ``left-adjoint" action, there is an equality of varieties $V_G(\mathcal{B}) = V_G(B)$.  We also demonstrate that there is an indecomposable summand $M$ of $B$ such that $V_G(B) = V_G(M)$, and deduce using a theorem of J. Carlson \cite{Car} that the projective variety $\text{Proj }V_G(\mathcal{B})$ is connected.

In Section 3, we look at how $V_G(\mathcal{B})$ compares with the variety $X_B$ defined using the Hochschild cohomology ring of $B$.  This question is motivated by work done for finite groups by M. Linckelmann in \cite{L1} and \cite{L2}.  We prove in Proposition \ref{krull} that there is a finite surjective morphism of varieties $X_B \rightarrow V_G(\mathcal{B})$.  This result follows as an easy consequence of the results in the previous section.  We prove that for the principal block $B_0$ of $kG$, there is an isomorphism of varieties $X_{B_0} \cong V_G(\mathcal{B}_0)$, provided that either $B_0$ is local, or that the complexity of the trivial module is $\le 1$.

The motivation behind the comparison of $X_B$ and $V_G(\mathcal{B})$ is that $X_B$ is defined only in terms of $B$ and hence is a true invariant of $B$, as opposed to $V_G(\mathcal{B})$ whose definition involves the group algebra in which $B$ is a summand.  In Section 4, we look at this question from the perspective of $p$-points.  We set $P(G)_{\mathcal{B}} = \bigcup P(G)_{S_i}$, for all simple modules lying in $B$, and compare this to the space of \textit{flat-points} of $B$, denoted $F(B)$, which is defined by taking flat maps from $k[t]/(t^p)$ to $B$ (our notation and definition are slightly modified from the definition of a flat-point given in \cite{Far1}).  The projection map from $kG$ onto the block $B$ defines a map from $P(G)_{\mathcal{B}}$ to $F(B)$, which we show in Proposition \ref{injective} to be injective.  We also show, as a sort of analogue to a result in Section 3, that if $B_0$ is the principal block of $kG$, and if the trivial module is the only simple $B_0$-module, then there is a homeomorphism $P(G)_{\mathcal{B}_0} \cong F(B_0)$.  The main step in proving this comes in Theorem \ref{equiv}, which shows that if $G$ is a unipotent finite group scheme, then every flat map $k[t]/(t^p) \rightarrow kG$ is equivalent to one which factors through an abelian subgroup scheme (under the equivalence defined on such maps in \cite{FP1}).  This implies that, in terms of providing a representation-theoretic topological space which is homeomorphic to $\text{Proj H}^{\bullet}(G,k)$, the definition of a $p$-point could effectively drop the word ``abelian" from its factorization requirement.  We note however that we are unaware at this point how such an alteration in definition would affect other theories coming from $p$-points, such as modules of ``constant Jordan type" (see \cite{CFP}).

\subsection{Acknowledgements}
We wish to thank Eric Friedlander, without whom the writing of this paper would not be possible.  In particular his constant support and considerable mathematical insight were invaluable to our investigations.  We are also indebted to Julia Pevtsova for numerous helpful conversations and observations, and of course for developing, in collaboration with Eric, the tools used in the analysis of this paper.  The interest in blocks of finite group schemes was motivated by the work of Rolf Farnsteiner, and the results of Section 2 were aided by many helpful conversations with Rolf.  We would also like to thank Sarah Witherspoon for clarifying conversations about Hochschild cohomology for Hopf algebras.  Finally, we thank the referee for many helpful comments and observations.

\section{Notation and Recollections}

We will assume throughout that $k$ is an algebraically closed field of characteristic $p>0$.  Unless specified, tensor products are assumed to be over $k$.  If $G$ is a finite group scheme over $k$, we write its coordinate ring as $k[G]$, and define $kG := \text{Hom}_k(k[G],k)$.  Following the terminology of \cite{FP1}, we call $kG$ the \textit{group algebra} of $G$.  It is a finite dimensional cocommutative Hopf algebra with comultiplication $\Delta$, counit $\epsilon$, and antipode $s$.  The category $kG$-mod of finitely generated left $kG$-modules is equivalent to the category of finitely generated representations of $G$, and thus we will speak of the two interchangeably.  As an algebra, $kG$ is a direct product of its indecomposable two-sided ideals, which we call the blocks of $kG$, and write as $kG = B_0 + \cdots + B_r$.  By $e_i$ we denote the central idempotent of $B_i$, so that $B_i = kGe_i$.  A $kG$-module $M$ is said to lie in the block $B_i$ if $e_i$ acts as the identity map on $M$.  In the above decomposition of $kG$, the block $B_0$ will always denote the principal block of $kG$; it is the block in which the trivial module $k$ lies.

For a $kG$-module $M$, the cohomology groups $\text{H}^i(G,M)$ are defined to be the groups $\text{Ext}^i_{kG}(k,M)$.  We set 

$$\text{H}^{\bullet}(G,k) = \begin{cases} \bigoplus_{i \ge 0} \text{H}^i(G,k) & \text{if char $k = 2$} \\  \bigoplus_{i \ge 0} \text{H}^{2i}(G,k) & \text{if char $k \ne 2$} \end{cases}$$

\bigskip
\noindent It is a finitely generated commutative algebra over $k$ (\cite[1.1]{FS}), and and we denote by $V_G$ the maximal ideal spectrum of $\text{H}^{\bullet}(G,k)$.

\bigskip
Following the notation and terminology in \cite{Ben2}, for $M,N \in kG$-mod, $I_G(M,N)$ denotes the annihilator in $\text{H}^{\bullet}(G,k)$ of $\text{Ext}^*(M,N)$ as a module given by the cup product.  Equivalently, this is the annihilator of the module $\text{H}^*(G,\text{Hom}_k(M,N))$.  The relative support variety $V_G(M,N)$ is then the set of maximal ideals in $V_G$ which contain $I_G(M,N)$.

If $N=M$, we simply write $I_G(M)$ and $V_G(M)$, and call $V_G(M)$ the support variety of $M$.  Note that in this case, $I_G(M)$ can be given as the kernel of the map of graded algebras from $\text{H}^{\bullet}(G,k)$ to $\text{H}^{\bullet}(G,\text{Hom}_k(M,M))$, this map induced by the inclusion $k \hookrightarrow \text{Hom}_k(M,M)$.  See \cite[1.5, 5.6]{FP1} for a list of properties satisfied by support varieties for modules of finite group schemes.

\bigskip
For an algebra $A$ we denote by $\text{HH}^i(A)$ the $i$-th Hochschild cohomology group of $A$ with coefficients in $A$, and define the group by $\text{HH}^i(A) := \text{Ext}^i_{A \otimes A^{op}}(A,A)$, where $A$ is a left $A \otimes A^{op}$-module in the usual way.  If $\zeta_1 \in \text{HH}^n(A)$, and $\zeta_2 \in \text{HH}^m(A)$, then by regarding these as $n$-fold and $m$-fold extensions of $A$ by $A$ respectively, we can tensor over $A$ to obtain an $(n+m)$-fold extension $\zeta_1 \smile \zeta_2 \in \text{HH}^{n+m}(A)$, and this gives the space $\text{HH}^*(A)$ the structure of an associative algebra, which was shown to be graded-commutative by M. Gerstenhaber in \cite{G}.  Just as with $\text{H}^{\bullet}(G,k)$, we denote by $\text{HH}^{\bullet}(A)$ the even Hochschild cohomology ring.

We will later make use of the well-known fact that the decomposition of $kG$ into blocks yields an algebra decomposition $\text{HH}^{\bullet}(kG) \cong  \text{HH}^{\bullet}(B_0) \times \cdots \times \text{HH}^{\bullet}(B_r)$.

\bigskip
A $p$-point \cite{FPE} of a finite group scheme $G$ is a map of algebras $\alpha: k[t]/(t^p) \rightarrow kG$, such that:

\begin{enumerate}

\item $\alpha^*(kG)$ is a projective $k[t]/(t^p)$-module.
\item $\alpha$ factors through a unipotent, abelian subgroup scheme of $G$.

\end{enumerate}

Two $p$-points $\alpha$ and $\beta$ are equivalent, written $\alpha \sim \beta$, if $\alpha^*(M) \text{ projective } \iff \beta^*(M) \text{ projective}$, for all $M$ in $kG$-mod.

By $P(G)$ we denote the set of all equivalence classes of $p$-points of G.  For a $kG$-module $M$, $P(G)_M$ is the set $\{ [\alpha] \in P(G) | \, \alpha^*(M) \text{ is not projective} \}$.  Declaring the closed sets of $P(G)$ to be all of the sets $P(G)_M, M \in kG$-mod, defines  a Noetherian topology on $P(G)$ (\cite[3.10]{FP1}), and the space $P(G)$ with this topology is called the space of $p$-points of $G$.  This space provides a non-cohomological description of support varieties, as shown in the following theorem.

\begin{thm}\label{p-points}\cite[4.11]{FP1}
There is a homeomorphism 

\vspace{0.03 in}
\begin{center} $\Psi: P(G) \stackrel{\sim}{\rightarrow} \textup{Proj H}^{\bullet}(G,k)$ \end{center}
\vspace{0.03 in}

\noindent satisfying the property that

\vspace{0.03 in}
\begin{center} $\Psi^{-1}(\textup{Proj }V_G(M)) = P(G)_M.$ \end{center}
\vspace{0.03 in}

\noindent for every finitely generated $G$-module $M$.
\end{thm}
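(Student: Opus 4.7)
The plan is to define $\Psi$ via pullback in cohomology, use Carlson modules to simultaneously verify well-definedness, injectivity, and the support-variety formula, and then appeal to a Quillen-type stratification to get surjectivity.

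To define the map, fix a representative $\alpha : k[t]/(t^p) \to kG$ of a class in $P(G)$. The induced algebra map $\alpha^* : \text{H}^{\bullet}(G,k) \to \text{H}^{\bullet}(k[t]/(t^p),k)$ lands in a polynomial ring (the even cohomology of $k[t]/(t^p)$ is polynomial in one generator), so $\ker \alpha^*$ is a homogeneous prime ideal of $\text{H}^{\bullet}(G,k)$. It is not the irrelevant ideal: flatness of $\alpha^*(kG)$ over $k[t]/(t^p)$ ensures that $\alpha^*$ is nonzero in some positive degree. So I set $\Psi([\alpha]) := \ker \alpha^* \in \text{Proj H}^{\bullet}(G,k)$, pending the well-definedness check below.

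The heart of the argument is the Carlson-module identification. For a homogeneous $\zeta \in \text{H}^n(G,k)$ with $n>0$, the Carlson module $L_\zeta$ sits in an extension $0 \to \Omega^n k \to L_\zeta \to k \to 0$ classified by $\zeta$, and a direct computation shows $\alpha^*(\zeta) = 0$ if and only if $\alpha^*(L_\zeta)$ is projective. More generally, for $M \in kG$-mod, $\zeta \in I_G(M)$ if and only if $\zeta \smile \text{id}_M = 0$, which leads to the characterization: $\alpha^*(M)$ is non-projective if and only if $\ker \alpha^* \supseteq I_G(M)$, i.e., $\Psi([\alpha]) \in \text{Proj }V_G(M)$. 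This proves the preimage formula $\Psi^{-1}(\text{Proj }V_G(M)) = P(G)_M$; applied to Carlson modules realizing an arbitrary closed point as a support variety, it also shows that $\Psi$ depends only on the equivalence class $[\alpha]$ and is injective.

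Surjectivity is where I expect the main obstacle to lie. The approach is a Quillen-type stratification: every homogeneous prime of $\text{H}^{\bullet}(G,k)$ should be the pullback, along restriction, of a closed point on some abelian unipotent subgroup scheme $C \subseteq G$. For such a $C$, the algebra $kC$ is a truncated polynomial ring in several variables, and explicit $p$-points obtained by choosing a line in the radical realize every closed point of $\text{Proj H}^{\bullet}(C,k)$; composing with the inclusion $kC \hookrightarrow kG$ gives a $p$-point of $G$ hitting the prescribed prime. The delicate ingredient is showing that a suitable detection subscheme always exists — this is the deepest input required. Once surjectivity is in hand, the fact that $\Psi$ is a homeomorphism is immediate from the preimage formula, since the closed sets of $P(G)$ are by definition the $P(G)_M$ and the closed sets of $\text{Proj H}^{\bullet}(G,k)$ are cut out by the cohomology annihilators $I_G(M)$.
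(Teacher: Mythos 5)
This theorem is cited from \cite[4.11]{FP1} and is not proved in the paper under review, so there is no internal argument against which to check your reconstruction; the relevant comparison is with the Friedlander--Pevtsova proof itself.

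Your skeleton matches theirs: define $\Psi([\alpha]) = \ker\alpha^{\bullet}$, use Carlson modules $L_\zeta$ to obtain the preimage formula, deduce well-definedness and injectivity, and reduce surjectivity to a detection statement. Two points, however, are more than cosmetic. First, the implication you state for Carlson modules runs in the wrong direction: since $V_G(L_\zeta)$ is the zero locus of $\zeta$, one has that $\alpha^*(L_\zeta)$ is \emph{non}-projective precisely when $\alpha^{\bullet}(\zeta)=0$, not when it is nonzero. The conclusion you reach is the right one, but as written the key equivalence would invert the preimage formula. Second, and more seriously, your account of surjectivity compresses what is in fact the bulk of \cite{FP1}. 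The stratification fact you invoke --- that every homogeneous prime of $\text{H}^{\bullet}(G,k)$ is pulled back from a closed point of some abelian unipotent subgroup scheme $C\subseteq G$ --- is not correct as stated over $k$ for an arbitrary finite group scheme: the detecting subgroup scheme generally only exists after a field extension, and one must additionally show that the space $P(G)$ and the support sets $P(G)_M$ are insensitive to such extensions. Friedlander and Pevtsova prove this by treating the finite-group case (Quillen) and the infinitesimal case (Suslin--Friedlander--Bendel detection) separately and then assembling via a connected-\'etale type decomposition, and the needed invariance-under-base-change result is itself a theorem, not a formality. Flagging this as a ``delicate ingredient'' understates it; it is where essentially all of the content of the cited theorem resides. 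Finally, your assertion that $\ker\alpha^{\bullet}$ avoids the irrelevant ideal is exactly Lemma \ref{kernel} of the paper, which is also not automatic and deserves to be cited or argued rather than attributed to ``flatness ensures.''
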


\section{The Variety $V_G(\mathcal{B})$}

Let $G$ be a finite group scheme with group algebra $kG$, and let $B$ be a block of $kG$.  There is a smallest closed subset of $V_G$ which contains $V_G(M)$ for all $M$ lying in $B$, which we will denote by $V_G(\mathcal{B})$.  Basic properties of support varieties can be used to show that if $\{ S_i \}$ is a complete set of non-isomorphic simple $B$-modules, then

$$V_G(\mathcal{B}) = \bigcup_i V_G(S_i)$$

\bigskip
Similarly, we can define $P(G)_{\mathcal{B}} := \bigcup P(G)_{M}$, the union being over all modules $M$ lying in $B$.  By Theorem \ref{p-points}, $P(G)_{\mathcal{B}} \cong \text{Proj }V_G(\mathcal{B})$.

\bigskip
The usefulness of these spaces can be seen in this next theorem due to Farnsteiner.

\begin{thm} \cite[3.1]{Far2}
Let $B$ be a block of a finite group scheme $G$.

\begin{enumerate}

\item dim $V_G(\mathcal{B}) = 0$ if and only if $B$ is a simple algebra.
\item If dim $V_G(\mathcal{B}) \ge 2$, then $B$ has infinite representation type.
\item If dim $V_G(\mathcal{B}) \ge 3$, then $B$ has wild representation type.

\end{enumerate}

\end{thm}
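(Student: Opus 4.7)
The plan is to handle the three parts in ascending order of difficulty, each relying on progressively stronger results about self-injective algebras and their representation type.

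For part (1), I would start from the standard fact (which follows from \cite[1.5, 5.6]{FP1}) that for a finite group scheme, $\dim V_G(M) = 0$ if and only if $M$ is projective. Thus the hypothesis $\dim V_G(\mathcal{B})=0$ forces every simple $B$-module $S_i$ to be projective, so $B$ is semisimple. Since $B$ is by construction an indecomposable two-sided ideal of $kG$, an indecomposable semisimple algebra over the algebraically closed field $k$ must be a matrix algebra $M_n(k)$, hence simple. The converse is immediate because every module over a simple algebra is projective, so all $V_G(S_i)$ collapse to $\{0\}$.

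For part (2), the input I would invoke is the theorem (due in various generality to Heller, Auslander--Reiten, and in the finite-group-scheme context to Farnsteiner) that a finite-dimensional self-injective algebra of finite representation type has every non-projective indecomposable module $\Omega$-periodic. In support-variety language this says that finite representation type of $B$ forces every $B$-module to have complexity $\le 1$, equivalently $\dim V_G(M)\le 1$ for all $M$ lying in $B$. Applied to the finitely many simple $B$-modules, this gives $\dim V_G(\mathcal{B})\le 1$, contradicting the hypothesis; so $B$ must have infinite representation type.

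For part (3), which is the real obstacle, I would invoke the Drozd tame/wild dichotomy to reduce the problem to proving: if $B$ is tame, then $\dim V_G(\mathcal{B})\le 2$. In the finite group case this is accessible through Erdmann's classification of tame blocks (dihedral, semidihedral, quaternion defect), but for a general finite group scheme no such classification is available, so one must work intrinsically. My plan would be to combine two ingredients. First, use the fact that in the stable Auslander--Reiten quiver of a tame self-injective algebra the shapes of connected components are constrained (no wild components, and tubes of bounded rank), a result due to Erdmann--Skowro\'{n}ski. Second, translate this through Theorem \ref{p-points}: a $p$-point $\alpha$ restricts $B$-modules to periodic $k[t]/(t^p)$-modules, and as $[\alpha]$ varies over a subvariety of $\text{Proj}\,V_G(\mathcal{B})$, the associated Heller translates $\Omega^i(S)$ of a fixed simple $S$ generate enough indecomposables to fill a Kronecker or wild-type subcategory whenever the dimension of $V_G(\mathcal{B})$ exceeds $2$. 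The hard part will be making the passage from ``three-dimensional family of support data'' to ``wild subcategory'' rigorous; I would follow Farnsteiner's approach of exhibiting an explicit embedding of the module category of a wild quiver (for instance $\widetilde{E}_n$ type) into $B$-mod by using rank-variety or $\pi$-point techniques to produce sufficiently many non-isomorphic indecomposables parametrized by a projective surface inside $\text{Proj}\,V_G(\mathcal{B})$.
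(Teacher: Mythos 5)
Your treatment of parts (1) and (2) matches the paper exactly. The paper's own remark says part (1) ``is immediate given the properties of support varieties'' and part (2) ``follows from a result of Heller \cite{Hel} for self-injective algebras having finite representation type,'' and your arguments are precisely the standard fleshing-out of those two observations: $\dim V_G(S_i)=0$ for all $i$ forces the simples to be projective, hence $B$ semisimple, hence (being indecomposable over algebraically closed $k$) a matrix algebra; and Heller's $\Omega$-periodicity result bounds the complexity of every module by~$1$, so $\dim V_G(\mathcal B)\le 1$ if $B$ has finite type. Nothing to add there.

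For part (3) you have attempted more than the paper does: the paper simply cites \cite[3.1]{Far2} and offers no proof, so there is no ``paper's proof'' to compare against. Your sketch is in the right neighborhood --- Farnsteiner's actual argument does use $\pi$-point (or $p$-point) technology together with structure theory of the stable Auslander--Reiten quiver and Crawley-Boevey's characterization of tame self-injective algebras via component shapes --- but as written it has a real gap, and you correctly flag it yourself. Specifically, the move from ``a surface's worth of support data in $\mathrm{Proj}\,V_G(\mathcal B)$'' to ``an exact embedding of a wild category into $B$-mod'' is precisely the nontrivial content of Farnsteiner's theorem; it is not something that follows formally from Erdmann--Skowro\'nski's constraints on tame components plus the existence of many $p$-points restricting to periodic modules. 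In particular, the claim that Heller translates of a fixed simple ``generate enough indecomposables to fill a Kronecker or wild-type subcategory'' is not a proof --- $\Omega$-translates alone give a one-parameter family at best, and one has to produce genuinely two-parameter families of pairwise nonisomorphic indecomposables, which requires Carlson-module or rank-variety constructions analyzed with care. If you want to include part (3), the honest thing to do (and what the paper does) is to cite \cite[3.1]{Far2} rather than to reprove it.
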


\begin{remark}
The theorem referenced from \cite{Far2} proves (3), but we have included the other two cases for completeness.  Part (1) is immediate given the properties of support varieties, and (2) follows from a result of Heller \cite{Hel} for self-injective algebras having finite representation type.
\end{remark}

\bigskip
The use of $\mathcal{B}$ in our notation above is to indicate that we are not considering the support variety of the module $B$ as a summand of the left regular representation, as this variety is always just a single point.  We can however consider $B$ as a module under the left-adjoint action of $kG$.  Recall that for $x \in kG, b \in B$, and writing $\Delta(x) = \sum x_{(1)} \otimes x_{(2)}$, the left-adjoint action is given by

$$x.b = \sum x_{(1)}bs(x_{(2)})$$

With this action, $B$ is a $G$-algebra.  That is, the multiplication map $m: B \otimes B \rightarrow B$ is a map of $kG$-modules.  From this point on, any reference to $B$ as a $kG$-module will assume it is given by the left-adjoint action.  The module $B$ does not in general lie in the block $B$ (for instance the trivial module is a composition factor), however as our next result shows, $V_G(B)$ is equal to $V_G(\mathcal{B})$.

We first establish a few lemmas in order to simplify the proof.  Recall that for a module $M$, the fixed points of $M$, denoted $M^G$, are those $m \in M$ such that $x.m = \epsilon(x)m$.

\begin{lemma}\label{center}
The fixed points of $B$ are precisely the elements in the center of $B$.
\end{lemma}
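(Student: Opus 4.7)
The plan is to prove the equality $B^G = Z(B)$ by a short Hopf-algebra computation based on the antipode identity $\sum x_{(1)} s(x_{(2)}) = \epsilon(x)\cdot 1 = \sum s(x_{(1)}) x_{(2)}$.

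For the inclusion $Z(B)\subseteq B^G$, I would first observe that any $b \in Z(B)$ is in fact central in all of $kG$: elements of the other blocks $B_j$ annihilate $b$ on either side via the orthogonal central idempotents, and $b$ commutes with $B=B_i$ by assumption. Pulling $b$ past $x_{(1)}$ in $x.b = \sum x_{(1)}\,b\,s(x_{(2)})$ then gives $b \sum x_{(1)} s(x_{(2)}) = \epsilon(x) b$, so $b \in B^G$.

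For the reverse inclusion, the key move is to evaluate
\[
T := \sum (x_{(1)}.b)\, x_{(2)} \;=\; \sum x_{(1)}\, b\, s(x_{(2)})\, x_{(3)}
\]
in two different ways. Using coassociativity to regroup as $\sum x_{(1)}\, b\, \bigl(s(x_{(2)}) x_{(3)}\bigr)$ and applying the antipode identity to the inner sum $\sum s(x_{(2)}) x_{(3)} = \epsilon(x_{(2)})\cdot 1$, the expression collapses to $xb$. On the other hand, if $b \in B^G$ then $x_{(1)}.b = \epsilon(x_{(1)}) b$, so $T = b \sum \epsilon(x_{(1)}) x_{(2)} = bx$. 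Comparing the two evaluations, $xb = bx$ for every $x \in kG$, hence $b$ is central in $kG$ and in particular lies in $Z(B)$.

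I do not anticipate any substantive obstacle here; the only thing requiring genuine care is the coassociativity reindexing inside $T$ so that the antipode identity is applied to the intended pair of factors. Once that bookkeeping is in place, both inclusions follow immediately from the defining Hopf-algebra axioms together with the fact that the block decomposition $kG = B_0 \oplus \cdots \oplus B_r$ identifies $Z(B_i)$ with $B_i \cap Z(kG)$.
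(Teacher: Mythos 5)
Your proof is correct and follows essentially the same route as the paper's: the forward inclusion uses commutativity to collapse $\sum x_{(1)}bs(x_{(2)})$ via the antipode identity, and the reverse inclusion rewrites $xb$ (via counit, coassociativity, and antipode) as $\sum (x_{(1)}.b)\,x_{(2)}$ and then applies the $B^G$ hypothesis to obtain $bx$. The only cosmetic difference is that you organize the converse by evaluating a single expression $T$ in two ways rather than chaining the equalities directly, and you make explicit the (implicit in the paper) observation that $Z(B)=B\cap Z(kG)$.
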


\begin{proof}
If $b \in Z(B)$, then $x.b = \sum x_{(1)}bs(x_{(2)}) = \sum x_{(1)}s(x_{(2)})b = \epsilon(x)b$.  Conversely, if $y.b = \epsilon(y)b$ for all $y \in kG$, then we have $xb = \sum x_{(1)}\epsilon(x_{(2)})b = \sum x_{(1)}b\epsilon(x_{(2)})$, which by coassociativity and the definition of the antipode we can express as $\sum x_{(1)}bs(x_{(2)})x_{(3)}$.  In view of the fact that $b \in B^G$, this is then equal to $\sum \epsilon(x_{(1)})bx_{(2)} = \sum b\epsilon(x_{(1)})x_{(2)} = bx$.
\end{proof}

\begin{lemma}\label{relative}
Let $A$ be a finite dimensional $G$-algebra.  Then $I_G(A) = I_G(k,A)$; therefore $V_G(A) = V_G(k,A)$
\end{lemma}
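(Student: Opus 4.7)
The plan is to reduce the computation of $I_G(A)$ and $I_G(k,A)$ to the kernels of two explicit graded-algebra maps out of $\text{H}^\bullet(G,k)$, use the $G$-algebra structure on $A$ to factor one map through the other, and then construct an explicit $kG$-module retraction that forces the two kernels to coincide.

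My first step is the general observation that for any $G$-algebra $R$ with unit $1_R$, the cup product (together with the multiplication on $R$) makes $\text{H}^*(G,R)$ into a graded associative algebra with identity $1_R \in R^G = \text{H}^0(G,R)$. The $\text{H}^\bullet(G,k)$-module structure on $\text{H}^*(G,R)$ is then precisely the one coming from the graded algebra map $\phi_R \colon \text{H}^\bullet(G,k) \to \text{H}^*(G,R)$ induced by the unit $k \to R$, and a single application of associativity of cup product gives $\zeta \cdot \eta = \phi_R(\zeta)\eta$ for every $\zeta \in \text{H}^\bullet(G,k)$ and $\eta \in \text{H}^*(G,R)$. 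Hence the annihilator of $\text{H}^*(G,R)$ is exactly $\ker \phi_R$. Applying this observation twice, once to $R = A$ and once to $R = \text{End}_k(A)$ (a $G$-algebra under composition, with unit $\text{id}_A$), identifies $I_G(k,A)$ with $\ker \phi_A$ and $I_G(A)$ with $\ker \phi_{\text{End}_k(A)}$.

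Next I will use left multiplication $L \colon A \to \text{End}_k(A)$, $a \mapsto L_a$. A direct check (using that $A$ is a $G$-algebra) shows that $L$ is simultaneously a unital $k$-algebra map and a $kG$-module map, and it fits into a commutative triangle together with the units $k \to A$ and $k \to \text{End}_k(A)$. On cohomology this gives the factorization $\phi_{\text{End}_k(A)} = \text{H}^*(G,L) \circ \phi_A$, from which the inclusion $I_G(k,A) \subseteq I_G(A)$ is immediate.

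For the reverse inclusion I will exhibit an explicit $kG$-linear retraction of $L$, namely the evaluation map $r \colon \text{End}_k(A) \to A$ defined by $r(f) = f(1_A)$. The identity $r \circ L = \text{id}_A$ is obvious, while $G$-equivariance relies on $1_A \in A^G$: using the formula $(x \cdot f)(a) = \sum x_{(1)} \cdot f(s(x_{(2)}) \cdot a)$ one computes $r(x \cdot f) = \sum x_{(1)} \cdot f(\epsilon(x_{(2)}) 1_A) = x \cdot f(1_A) = x \cdot r(f)$. Thus $L$ is a split monomorphism of $kG$-modules, so $\text{H}^*(G,L)$ is split injective, which forces $\ker \phi_A = \ker \phi_{\text{End}_k(A)}$ and hence $I_G(k,A) = I_G(A)$. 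Passing to the maximal ideals containing this common ideal gives $V_G(A) = V_G(k,A)$. The step I expect to require the most care is the initial reduction: one must recognize that the cup product action factors through $\phi_R$ and then exploit associativity together with the existence of a multiplicative unit in $\text{H}^*(G,R)$ to collapse the annihilator to a kernel. Once that is in place, both the factorization and the retraction are essentially forced by the data available.
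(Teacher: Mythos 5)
Your proof is correct, but it takes a genuinely different (and more self-contained) route than the paper's. Both arguments establish the inclusion $I_G(k,A) \subseteq I_G(A)$ by factoring the relevant structure map through $\text{H}^{\bullet}(G,A)$, so there the approaches essentially coincide. The divergence is in the reverse inclusion: the paper cites Benson \cite[5.7]{Ben2}, which rests on the cup product action of $\text{H}^{\bullet}(G,k)$ on $\text{H}^*(G,A)$ factoring through the Yoneda action of $\text{Ext}^*_{kG}(A,A)$, i.e., on a cup/Yoneda compatibility. You instead observe that the unital $kG$-linear map $L \colon A \to \text{End}_k(A)$ (left multiplication) admits the explicit $kG$-linear retraction $r(f) = f(1_A)$, so $L$ is a split monomorphism of $kG$-modules, $\text{H}^*(G,L)$ is injective, and the factorization $\phi_{\text{End}_k(A)} = \text{H}^*(G,L) \circ \phi_A$ then forces equality of kernels in one stroke. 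This sidesteps the Yoneda-product argument entirely and avoids the external citation; the price, correctly flagged in your proof, is the assumption $1_A \in A^G$, which is part of the standard notion of a $G$-algebra (and holds in the paper's application by Lemma \ref{center}, where $1_B = e_0 \in Z(B) = B^G$). One small remark: your opening reduction, that the annihilator of $\text{H}^*(G,R)$ under cup product equals $\ker \phi_R$ for any $G$-algebra $R$, is the general form of the observation the paper already records for $R = \text{Hom}_k(M,M)$ in its notation section, so it is worth phrasing it as an extension of that remark rather than a new fact.
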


\begin{proof}
The action of $\text{H}^{\bullet}(G,k)$ on $\text{Ext}^*_{kG}(A,A)$ via the cup product factors as the algebra map $\text{H}^{\bullet}(G,k) \rightarrow \text{H}^*(G,A)$ followed by the action $\text{H}^*(G,A) \otimes \text{Ext}^*_{kG}(A,A) \rightarrow \text{Ext}^*_{kG}(A,A \otimes A) \rightarrow \text{Ext}^*_{kG}(A,A)$, the last map induced by the multiplication map of $A$.  Thus $I_G(k,A) \subseteq I_G(A)$.  On the other hand, as shown in \cite[5.7]{Ben2} (and the same arguments holding for finite group schemes), the inclusion $I_G(A) \subseteq I_G(k,A)$ follows from the cup product of $\text{H}^{\bullet}(G,k)$ on $\text{H}^*(G,A)$ factoring through $\text{Ext}^*_{kG}(A,A)$ acting via the Yoneda product on $\text{H}^*(G,A)$.
\end{proof}

\begin{thm}\label{same}
Let $B$ be a block of $kG$.  With the left-adjoint action of $kG$ on $B$, we have

\begin{enumerate}

\item $V_G(B) = V_G(\mathcal{B})$.
\item $V_G(B) = V_G(M)$, for $M$ some indecomposable summand of $B$.  In particular, the projective variety $\text{Proj } V_G(\mathcal{B})$ is connected.

\end{enumerate}

\end{thm}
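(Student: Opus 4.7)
The plan is to handle the two parts in sequence, leveraging Lemmas \ref{center} and \ref{relative} throughout. For (1), I prove the two inclusions separately. For $V_G(\mathcal{B}) \subseteq V_G(B)$, the key observation is that for each simple $B$-module $S_i$, left multiplication by elements of $B$ defines a surjective $G$-algebra homomorphism $\pi_i : B \to \text{End}_k(S_i)$ (its $G$-equivariance is a direct check from the adjoint action formula, and its surjectivity is Jacobson density). Since $\pi_i(e_B) = \text{id}_{S_i}$, the composition $k \to B \to \text{End}_k(S_i)$ coincides with the canonical unit map of the $G$-algebra $\text{End}_k(S_i)$. Applying $\text{H}^{\bullet}(G,-)$ yields a factorization $\text{H}^{\bullet}(G,k) \to \text{H}^{\bullet}(G,B) \to \text{H}^{\bullet}(G,\text{End}_k(S_i))$ of graded ring maps, so the kernel of the leftmost map is contained in the kernel of the composition. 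By Lemma \ref{relative} and the fact that for a $G$-algebra $A$ the annihilator of $\text{H}^{\bullet}(G,A)$ over $\text{H}^{\bullet}(G,k)$ is exactly the kernel of the unit map, these kernels are precisely the defining ideals of $V_G(B)$ and $V_G(\text{End}_k(S_i)) = V_G(S_i)$. Hence $V_G(S_i) \subseteq V_G(B)$ for every simple $S_i$ in $B$.

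For $V_G(B) \subseteq V_G(\mathcal{B})$, I would use the Jacobson radical filtration $B \supseteq J \supseteq J^2 \supseteq \cdots \supseteq J^n = 0$. Each $J^i$ is a two-sided ideal of $B$ and hence stable under the left-adjoint $G$-action. The subquotient $J^i/J^{i+1}$ is a semisimple $B$-bimodule whose isotypic decomposition is $G$-equivariant, since the central idempotents of $B/J$ lie in $B^G = Z(B)$ by Lemma \ref{center}. As a $G$-module, $J^i/J^{i+1}$ is therefore a direct sum of pieces of the form $U_{ab} \otimes S_a \otimes S_b^*$ for certain multiplicity $G$-modules $U_{ab}$ and simple $B$-modules $S_a, S_b$; by the tensor product theorem, each such piece has support contained in $V_G(S_a) \cap V_G(S_b) \subseteq V_G(\mathcal{B})$. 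Subadditivity of support varieties applied to the short exact sequences arising from the filtration then yields $V_G(B) \subseteq V_G(\mathcal{B})$, completing (1).

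For (2), the plan is to exhibit an indecomposable $kG$-summand $M$ of $B$ with $V_G(M) = V_G(B)$; Carlson's theorem \cite{Car} will then immediately give that $\text{Proj}\,V_G(M) = \text{Proj}\,V_G(\mathcal{B})$ is connected. Decomposing $B = \bigoplus_i M_i$ into indecomposables in the adjoint $kG$-module category, the multiplication $m : B \otimes B \to B$ is $G$-equivariant and split by the section $b \mapsto 1 \otimes b$ (which is $G$-equivariant because $1 = e_B \in B^G$). Thus $B$ is a direct summand of $B \otimes B = \bigoplus_{i,j} M_i \otimes M_j$, and by Krull--Schmidt each summand $M_k$ of $B$ appears as a summand of some $M_i \otimes M_j$, yielding $V_G(M_k) \subseteq V_G(M_i) \cap V_G(M_j)$ via the tensor product theorem.

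The main obstacle I anticipate in (2) is bridging these tensor-product containments to the equality $V_G(M) = V_G(B)$ for a single summand $M$. The crucial additional input should be the indecomposability of $B$ as an algebra, i.e., the fact that $Z(B) = B^G$ is local and admits no non-trivial $G$-fixed central idempotents; this should be incompatible with partitioning the indecomposable summands into classes whose support varieties lack a common dominator. Taking $M$ to be an indecomposable summand whose $V_G(M)$ has maximal dimension, and carefully exploiting the interplay of the multiplication with the $kG$-module decomposition and the local structure of $Z(B)$, ought to force $V_G(M) = V_G(B)$ and complete (2).
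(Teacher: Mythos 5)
Your proof of part (1) is essentially correct, though the two inclusions take somewhat different routes from the paper. The forward inclusion ($V_G(\mathcal{B})\subseteq V_G(B)$) is the same argument: the paper factors $k\to \operatorname{Hom}_k(M,M)$ through $B$ for an arbitrary $M$ lying in $B$, and you do it for each simple $S_i$, which suffices. For the reverse inclusion the paper uses the single short exact sequence $0\to J(B)\to B\to B/J(B)\to 0$ and shows that the image of $\mathrm{H}^{\bullet}(G,J(B))$ in $\mathrm{H}^{\bullet}(G,B)$ is a nilpotent ideal (directly from nilpotence of $J(B)$ and the cup product), whereas you use the full radical filtration together with subadditivity. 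Your route works, but the assertion that each $J^i/J^{i+1}$ decomposes as $\bigoplus_{a,b} U_{ab}\otimes S_a\otimes S_b^*$ as a $G$-module is doing more work than the one-line justification you gave. It is true, but the transparent way to get $V_G(J^i/J^{i+1})\subseteq V_G(\mathcal B)$ is to observe that each isotypic piece $W_{ab}$ is a module over the $G$-algebra $\operatorname{End}_k(S_a\otimes S_b^*)$ with $G$-equivariant structure maps, hence a $G$-module direct summand of $\operatorname{End}_k(S_a\otimes S_b^*)\otimes W_{ab}$ (unit composed with action equals identity), giving $V_G(W_{ab})\subseteq V_G(S_a\otimes S_b^*)\subseteq V_G(\mathcal B)$. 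The paper's nilpotence argument is cleaner, but either works.

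For part (2) you have correctly located the relevant structure — $B^G=Z(B)$ is local, so the unit appears in some indecomposable summand — but you stop short of the construction that closes the argument, and your fallback of choosing the $M_i$ with $V_G(M_i)$ of maximal dimension would not in general suffice (a priori $V_G(B)$ could have several irreducible components of the same dimension split between distinct summands). The key step in the paper is the following explicit $G$-module splitting: since $Z(B)=B^G$ is local, some indecomposable $G$-summand $M_i$ of $B$ contains $e+z$ with $e$ the unit of $B$ and $z$ central nilpotent. For any $N$ lying in $B$, the multiplication $f: M_i\otimes N\to N$, $m\otimes n\mapsto mn$, is a $G$-module map (same computation as for your section $b\mapsto 1\otimes b$), and $h: N\to M_i\otimes N$, $n\mapsto (e+z)\otimes\sum_{j\ge 0}(-z)^j n$, is a $G$-module map because $e+z\in B^G$ and $z$ is central. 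Then $f\circ h=\mathrm{id}_N$, so $N\mid M_i\otimes N$, hence $V_G(N)\subseteq V_G(M_i)$ for every $N$ in $B$. Combining with part (1) and $M_i\mid B$ gives $V_G(M_i)=V_G(B)=V_G(\mathcal B)$, and Carlson's connectedness theorem finishes. Note this is precisely the same ``unit plus nilpotent correction'' trick that would have made your filtration argument in part (1) rigorous, so discovering it there would also have unblocked part (2).
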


\begin{proof}
If $M$ is any module lying in $B$, then the map of $G$-algebras $k \rightarrow \text{Hom}_k(M,M)$ factors as $k \rightarrow B \rightarrow \text{Hom}_k(M,M)$.  Thus the map $\text{H}^{\bullet}(G,k) \rightarrow \text{H}^{\bullet}(G,\text{Hom}_k(M,M))$ factors through the map $\text{H}^{\bullet}(G,k) \rightarrow \text{H}^{\bullet}(G,B)$.  We then have $I_G(k,B) \subseteq I_G(M)$ for all $M \in B$-mod, so that $V_G(M) \subseteq V_G(k,B)$ for all $M$ in $B$-mod.  Hence $V_G(\mathcal{B}) \subseteq V_G(k,B)$.

Conversely, suppose that $\zeta \in I_G(M)$ for all $M$ in $B$.  In particular $\zeta \in I_G(S)$ for all simple $B$-modules.  The Jacobson radical of $B$, $J(B)$, is a submodule of $B$, and there is an isomorphism of $G$-algebras $B/J(B) \cong \bigoplus \text{Hom}_k(S_i,S_i)$, for $\{S_i\}$ a set of non-isomorphic simple $B$-modules.  In the composite of maps

$$\text{H}^{\bullet}(G,k) \rightarrow \text{H}^{\bullet}(G,B) \rightarrow \text{H}^{\bullet}(G,B/J(B))$$

\noindent it follows then that $\zeta$ is sent to $0 \in \text{H}^{\bullet}(G,B/J(B))$.  However, from the short exact sequence of modules

$$0 \rightarrow J(B) \rightarrow B \rightarrow B/J(B) \rightarrow 0$$

\noindent we get a long exact sequence in cohomology, which in particular tells us that the kernel of the map $\text{H}^{\bullet}(G,B) \rightarrow \text{H}^{\bullet}(G,B/J(B))$ is given by the image of the map $\text{H}^{\bullet}(G,J(B)) \rightarrow \text{H}^{\bullet}(G,B)$.  As $J(B)$ is a nilpotent ideal, this image is a nilpotent ideal: if $\gamma \in  \text{H}^{2i}(G,B)$ can be represented by a map $\Omega^{2i}(k) \rightarrow B$ whose image is contained in $J(B)$, then by the definition of the cup product, $\gamma^n$ can be represented by map $\Omega^{2in}(k) \rightarrow B$ whose image is contained in the image of the map $J(B)^{\otimes n} \rightarrow J(B)$, $b_1 \otimes \cdots \otimes b_n \mapsto b_1 \cdots b_n$.  For large enough $n$ this image is $0$, and hence $\gamma$ is nilpotent.

Thus, the image of the element $\zeta$ in the map $\text{H}^{\bullet}(G,k) \rightarrow \text{H}^{\bullet}(G,B)$ is nilpotent, so that $\zeta$ is in the radical of $I_G(k,B)$.  Thus $V_G(k,B) \subseteq \bigcup V_G(S) = V_G(\mathcal{B})$.  Applying lemma \ref{relative}, we get $V_G(B) = V_G(\mathcal{B})$.

For the proof of (2), let $B \cong M_1 + \cdots + M_n$ be a direct sum decomposition into indecomposable submodules.  The space of fixed points of $B$ is equal to the sum of fixed spaces $M_1^G + \cdots + M_n^G$.  By lemma \ref{center}, $B^G = Z(B)$, which is a local algebra, and so at least one $M_i$ must contain the element $e + z$, where $e$ is the central idempotent of $B$ and $z$ is central and nilpotent.  For $N$ any $B$-module, the map $f: M_i \otimes N \rightarrow N$ given by $f(m \otimes n) = mn$ is a well-defined map coming from the action of $B$ on $N$.  It is a $G$-module map, since if $x \in kG$, then 

\vspace{-0.1 in}
$$f(x(m \otimes n)) =  f(\sum x_{(1)}ms(x_{(2)}) \otimes x_{(3)}n) = \sum x_{(1)}ms(x_{(2)})x_{(3)}n = xmn = xf(m \otimes n)$$

There is also a map $h: N \rightarrow M_i \otimes N$ given by $h(n) = (e+z) \otimes \sum_{i \ge 0} (-z)^in$, which is well-defined because $z$ is nilpotent.  Since $e,z$ are central in $B$, then for $x \in kG$ we have 

\vspace{-0.1 in}
$$(e+z) \otimes (\sum_{i \ge 0} (-z)^ixn) = \sum\left( \epsilon(x_{(1)})(e+z) \otimes x_{(2)}(\sum_{i \ge 0} (-z)^in) \right)$$

The right-hand sum is equal to $x.((e+z) \otimes (\sum_{i \ge 0} (-z)^in))$, so that $h$ is a $G$-module map.  Since $f \circ h = id_N$, $N$ is a summand of $M_i \otimes N$.  It follows that $V_G(N) \subseteq V_G(M_i)$ for all modules lying in $B$, and hence $V_G(\mathcal{B}) \subseteq V_G(M_i)$.  By part (1), and the fact that $M_i$ is a direct summand of $B$, this subset inclusion is actually an equality.

The connectedness of the variety $\text{Proj }V_G(M_i)$ is given by Carlson's theorem \cite{Car} on indecomposable modules for finite groups, the proof of which applies to the setting of finite group schemes. 
\end{proof}

\begin{remark}
As pointed out to us by J. Pevtsova, part (1) in the above theorem is analogous to a result of A. Premet in the context of reduced enveloping algebras (\cite[2.2]{Pr}), although the two methods of proof are quite different.
\end{remark}

We conclude this section by recording the relationship between the support variety of a block of a finite group, and the support coming from a defect group of the block.  We state the result for $p$-points, which simplifies one aspect of the proof.  We note that if $G$ is any finite group scheme with closed subgroup scheme $H$, then the inclusion $H \subseteq G$ induces a natural map $i: P(H) \rightarrow P(G)$.

\begin{prop}
Let $G$ be a finite group, $B$ a block of $kG$ with defect group $D$, and let $i: P(D) \rightarrow P(G)$ be the natural map on $p$-support spaces.  Then 
$$P(G)_{\mathcal{B}} = i(P(D)).$$
\end{prop}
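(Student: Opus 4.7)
The plan is to apply Theorem \ref{same} to reduce the statement to a claim about the single module $B$ with its adjoint action, and then to prove the two inclusions separately. By Theorem \ref{same}, $V_G(\mathcal{B}) = V_G(B)$, so by Theorem \ref{p-points} the statement $P(G)_{\mathcal{B}} = i(P(D))$ is equivalent to $P(G)_B = i(P(D))$.

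For the inclusion $P(G)_{\mathcal{B}} \subseteq i(P(D))$, I would invoke the classical block-theoretic fact that every simple module in $B$ is $D$-projective, since $D$ is a defect group of $B$. The standard inclusion $V_G(\text{Ind}_D^G N) \subseteq \text{res}_D^*(V_D)$ for any $kD$-module $N$, combined with $S \mid \text{Ind}_D^G (S|_D)$ for each simple $S$ in $B$, gives $P(G)_S \subseteq i(P(D))$, and taking the union over simples in $B$ yields the desired inclusion.

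For the reverse inclusion $i(P(D)) \subseteq P(G)_B$, I would show that the trivial $kD$-module $k$ is a direct summand of $B|_D$. Granted this, for any $[\alpha] \in P(D)$ the restriction $\alpha^*(B|_D)$ has $\alpha^*(k) = k$ as a summand, which is non-projective over $k[t]/(t^p)$, and hence $i([\alpha]) \in P(G)_B$.

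The main obstacle lies in producing this trivial summand. By a standard Higman-type argument, $k$ splits off $B|_D$ if and only if there exist $v \in B^D$ and a $kD$-equivariant functional $\phi : B \to k$ with $\phi(v) \neq 0$. Take $v = e_B$, which lies in $B^G \subseteq B^D$ by Lemma \ref{center}. A $D$-invariant $\phi \in \text{Hom}_{kD}(B,k)$ is the restriction of a linear functional on $kG$ which is constant on $D$-conjugation orbits in $G$. Writing $e_B = \sum_g c_g g$, every $D$-orbit on $G$ of size $>1$ has size a positive power of $p$, so its contribution to $\phi(e_B)$ vanishes in characteristic $p$; thus $\phi(e_B)$ depends only on the coefficients $c_g$ for $g \in C_G(D)$, i.e., on the Brauer image $\text{Br}_D(e_B) \in kC_G(D)$. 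The defining property of $D$ as a defect group of $B$ is precisely that $\text{Br}_D(e_B) \neq 0$, so there is some $g \in C_G(D)$ with $c_g \neq 0$, and the functional dual to $g$ then satisfies $\phi(e_B) = c_g \neq 0$, completing the splitting.
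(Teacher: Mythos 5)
Your proof is correct, but the second inclusion takes a genuinely different route from the paper's. For $P(G)_{\mathcal{B}} \subseteq i(P(D))$ both arguments use $D$-projectivity of modules in $B$ together with the induction property of support spaces, so there you coincide with the paper. For the reverse inclusion, the paper cites the classical existence of a trivial-source module $M'$ lying in $B$ with vertex $D$ (Benson, Vol.\ I, 6.3.3); since $k \mid M'|_D$, any $p$-point through $kD$ lands in $P(G)_{M'} \subseteq P(G)_{\mathcal{B}}$. You instead route through Theorem \ref{same} to replace $P(G)_{\mathcal{B}}$ by $P(G)_B$ and then exhibit a trivial $kD$-summand of $B$ under the adjoint action directly, taking $v = e_B \in B^G$ and building a splitting functional from the Brauer-correspondent coefficient, using $\mathrm{Br}_D(e_B) \neq 0$. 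The orbit-counting step (that $D$-conjugacy orbits of size $>1$ have order divisible by $p$, so only $C_G(D)$-coefficients of the central element $e_B$ survive in a $D$-invariant functional) is exactly the computation underlying the Brauer homomorphism, and the Higman-type splitting criterion is applied correctly. Your approach is more hands-on and uses only the Brauer-map characterization of the defect group rather than the deeper trivial-source/vertex theorem, and it ties in nicely with Theorem \ref{same} by showing that the block algebra itself, as an adjoint module, carries the witness to $i(P(D)) \subseteq P(G)_{\mathcal{B}}$; the paper's version is shorter but relies on a heavier black box.
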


\begin{proof}
Since every module lying in $B$ is the summand of a module induced from $kD$ to $kG$, we have by \cite[4.12]{FPS} that $P(G)_M \subseteq i(P(D))$ for all modules $M$ lying in $B$, and hence $P(G)_{\mathcal{B}} \subseteq i(P(D))$.  On the other hand, there is some module $M^{\prime}$ lying in $B$ which is a trivial source module and has vertex $D$ \cite[6.3.3]{Ben1}.  It follows that the trivial module is a summand of $M^{\prime}$ when restricted to $kD$.  If $\alpha$ is a $p$-point factoring through $kD$, then in view of the previous statement, we have $[\alpha] \in P(G)_{M^{\prime}} \subseteq P(G)_{\mathcal{B}}$.  Thus, $i(P(D)) \subseteq P(G)_{\mathcal{B}}$.
\end{proof}

\section{Comparison with Hochschild Cohomology}

Let $X_B$ be the maximal ideal spectrum of $\text{HH}^{\bullet}(B)$.  We will show that as an easy consequence of the results in the previous section, there is a finite surjective morphism of varieties from $X_B$ to $V_G(\mathcal{B})$, and then proceed to show a few instances in which the two are isomorphic. But first we will recall how the cohomology ring of a finite group scheme and its Hochschild cohomology ring relate to each other, citing as we go the appendix of \cite{PW}, which works these details out nicely for general finite dimensional Hopf algebras.

Following \cite{PW}, denote by $\delta$ the composite of the maps $(\text{Id} \otimes s) \circ \Delta$.  Then by \cite[7.1, 7.2]{PW}, $\delta$ defines an embedding of $kG$ into $kG \otimes kG^{op}$ such that

\begin{enumerate}

\item $kG \otimes kG^{op}$ is a projective $kG$-module, $kG$ acting via $\delta$

\item As $kG \otimes kG^{op}$-modules, $kG \cong (kG \otimes kG^{op}) \otimes_{\delta(kG)} k$

\end{enumerate}

At the same time, the left-adjoint action of $kG$ on itself is clearly just the restriction (via the embedding $\delta$) of the natural action of $kG \otimes kG^{op}$ on $kG$.  Thus, we can apply the Eckmann-Shaprio isomorphism to get
$$\text{H}^i(G,kG) = \text{Ext}^i_{kG}(k,kG) \cong \text{Ext}^i_{kG \otimes kG^{op}}(kG,kG) = \text{HH}^i(kG)$$

This proves that there is an isomorphism of vector spaces $\text{H}^*(G,kG) \cong \text{HH}^*(kG)$, and as proven in \cite[7.2]{PW}, this is also an isomorphism of algebras.  It is then straight-forward to see that if $B$ is a block of $kG$, it is a summand of $kG$ as a $kG \otimes kG^{op}$-module, and we have $\text{Ext}^i_{kG}(k,B) \cong \text{Ext}^i_{kG \otimes kG^{op}}(kG,B) \cong \text{Ext}^i_{B \otimes B^{op}}(B,B)$, and consequently, $\text{H}^{\bullet}(G,B) \cong \text{HH}^{\bullet}(B)$.

With this isomorphism of algebras established, we have the following proposition.

\begin{prop}\label{krull}
Let $B$ be a block of a finite group scheme $G$.  Then there is a finite surjective morphism of varieties $X_B \rightarrow V_G(\mathcal{B})$.  In particular, the Krull dimension of $\textup{HH}^{\bullet}(B)$ is equal to the dimension of the variety $V_G(\mathcal{B})$.
\end{prop}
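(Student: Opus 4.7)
The plan is to combine the algebra isomorphism $\text{HH}^{\bullet}(B) \cong \text{H}^{\bullet}(G,B)$ established in the paragraph just above the statement with the identity $V_G(\mathcal{B}) = V_G(k,B)$ coming from Theorem \ref{same}(1) together with Lemma \ref{relative}. Concretely, I would study the natural ring map
$$\phi: \text{H}^{\bullet}(G,k) \longrightarrow \text{H}^{\bullet}(G,B) \cong \text{HH}^{\bullet}(B)$$
induced by the unit $k \hookrightarrow B$ of the $G$-algebra $B$ (under the left-adjoint action), and verify that it becomes a finite ring extension after quotienting out its kernel, which cuts out $V_G(\mathcal{B})$.

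First, because $B$ is a $G$-algebra with unit, $\phi$ is a homomorphism of graded-commutative rings sending $1$ to $1$. The cup-product action of $\text{H}^{\bullet}(G,k)$ on $\text{H}^{*}(G,B)$ factors through $\phi$ (multiplying by the unit in $\text{H}^{*}(G,B)$), so the kernel of $\phi$ is precisely the annihilator $I_G(k,B)$. Thus $\phi$ factors as an injection
$$\bar\phi: \text{H}^{\bullet}(G,k)/I_G(k,B) \hookrightarrow \text{HH}^{\bullet}(B).$$

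Second, I would invoke the Friedlander--Suslin finite-generation theorem applied to the finite-dimensional $G$-module $B$: the module $\text{H}^{*}(G,B)$ is finitely generated over $\text{H}^{\bullet}(G,k)$. This makes $\bar\phi$ a finite, hence integral, extension of finitely generated commutative $k$-algebras.

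Finally, taking maximal ideal spectra produces a finite surjective morphism of varieties
$$X_B = \text{MaxSpec}\,\text{HH}^{\bullet}(B) \twoheadrightarrow \text{MaxSpec}\bigl(\text{H}^{\bullet}(G,k)/I_G(k,B)\bigr) = V_G(k,B).$$
By Lemma \ref{relative}, $V_G(k,B) = V_G(B)$, and by Theorem \ref{same}(1), $V_G(B) = V_G(\mathcal{B})$. The Krull dimension claim is then automatic: a finite surjective morphism of affine varieties preserves dimension, so $\dim \text{HH}^{\bullet}(B) = \dim V_G(\mathcal{B})$. The only nontrivial ingredient is the finite generation of $\text{H}^{*}(G,B)$ over $\text{H}^{\bullet}(G,k)$; everything else is bookkeeping that assembles results already proved in the excerpt.
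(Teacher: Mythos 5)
Your proposal is correct and follows the paper's argument essentially verbatim: identify $\text{HH}^{\bullet}(B)$ with $\text{H}^{\bullet}(G,B)$, invoke Friedlander--Suslin finite generation to make $\text{H}^{\bullet}(G,B)$ a finite $\text{H}^{\bullet}(G,k)$-module, obtain the finite surjective morphism onto $V_G(k,B)$, and apply Lemma \ref{relative} together with Theorem \ref{same} to identify $V_G(k,B)$ with $V_G(\mathcal{B})$. The only difference is cosmetic --- you spell out explicitly that $\ker\phi = I_G(k,B)$, which the paper leaves implicit in the passage from the finite module statement to the finite surjective morphism.
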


\begin{proof}
As just recalled, $\text{HH}^{\bullet}(B) \cong \text{H}^{\bullet}(G,B)$, and the latter is a finite module over $\text{H}^{\bullet}(G,k)$ by a theorem of Friedlander and Suslin (\cite[1.1]{FS}).  Thus, there is a finite surjective morphism of varieties $X_B \rightarrow V_G(k,B)$.  By Lemma \ref{relative} and Theorem \ref{same}, $V_G(k,B) = V_G(\mathcal{B})$.
\end{proof}

We next show that for certain principal blocks, there is an isomorphism, modulo nilpotent elements, between $\text{HH}^{\bullet}(B_0)$ and $\text{H}^{\bullet}(G,k)$.  For principal blocks of finite groups, M. Linckelmann was able to prove in \cite{L2} a much stronger result, proving that such an isomorphism holds in general.  We also note that S. Siegel and S. Witherspoon had previously proved in \cite{SW} this result for finite $p$-groups (in which case the group algebra is the principal block), and their proof is essentially the same as will be given to prove the first part of the following theorem.

\begin{thm}\label{nilpotents}
Let $B_0$ be the principal block of a finite group scheme $G$.  The rings $\textup{HH}^{\bullet}(B_0)$ and $\textup{H}^{\bullet}(G,k)$ are isomorphic modulo nilpotent elements in the following cases:

\begin{enumerate}

\item If the trivial module is the only simple $B_0$-module. 
\item If dim $V_G \le 1$.

\end{enumerate}

\end{thm}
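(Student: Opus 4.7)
The strategy is to analyze the algebra map $\mathrm{H}^{\bullet}(G,k)\to \mathrm{H}^{\bullet}(G,B_0)$ induced by the unit $\eta\colon k\hookrightarrow B_0$, and then translate the conclusions via the Eckmann--Shapiro isomorphism $\mathrm{HH}^{\bullet}(B_0)\cong \mathrm{H}^{\bullet}(G,B_0)$ recalled just before the theorem.

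For case (1), the hypothesis that $k$ is the only simple $B_0$-module gives $B_0/J(B_0)\cong k$ as $G$-algebras, and so the composition $k\xrightarrow{\eta} B_0\twoheadrightarrow B_0/J(B_0) = k$ is the identity. Applying $\mathrm{H}^{\bullet}(G,-)$ yields a composition $\mathrm{H}^{\bullet}(G,k)\to \mathrm{H}^{\bullet}(G,B_0)\xrightarrow{\pi} \mathrm{H}^{\bullet}(G,k)$ which is also the identity, so the first map is split injective. By the long exact cohomology sequence of $0\to J(B_0)\to B_0\to k\to 0$, $\ker(\pi)$ equals the image of $\mathrm{H}^{\bullet}(G,J(B_0))\to \mathrm{H}^{\bullet}(G,B_0)$, and the cup-product argument reproduced in the proof of Theorem \ref{same} shows this image is a nilpotent ideal (since $J(B_0)$ is nilpotent). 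Hence $\mathrm{H}^{\bullet}(G,B_0)$ and $\mathrm{H}^{\bullet}(G,k)$ agree modulo nilpotent elements.

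For case (2), since $k\in B_0$ one has $V_G=V_G(k)\subseteq V_G(\mathcal{B}_0)\subseteq V_G$, so $V_G(\mathcal{B}_0)=V_G$, and Proposition \ref{krull} supplies a finite surjective morphism $X_{B_0}\to V_G$, equivalently a finite injection of reduced graded rings $\mathrm{H}^{\bullet}(G,k)/\mathrm{nil}\hookrightarrow \mathrm{HH}^{\bullet}(B_0)/\mathrm{nil}$; the task is to show this injection is actually an equality. If $\dim V_G=0$, Farnsteiner's theorem forces $B_0\cong \mathrm{End}_k(S)$ simple, whence $\mathrm{HH}^{\bullet}(B_0)=k$; and since $V_G$ is a point, every positive-degree element of $\mathrm{H}^{\bullet}(G,k)$ is nilpotent, so $\mathrm{H}^{\bullet}(G,k)/\mathrm{nil}=k$ as well. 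If $\dim V_G=1$, the connectedness of $\mathrm{Proj}\,V_G(\mathcal{B}_0)$ guaranteed by Theorem \ref{same} forces $\mathrm{Proj}\,V_G$ to be a single closed point, so $\mathrm{H}^{\bullet}(G,k)/\mathrm{nil}$ is a graded integral domain of Krull dimension one.

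The main obstacle is the dim-one subcase: promoting the finite injection of reduced graded one-dimensional rings to an isomorphism, which amounts to matching the smallest degree of a non-nilpotent homogeneous element on the two sides. My plan is to exploit the eventual periodicity forced by $\mathrm{cx}_G(k)=1$: the period $d$ of the minimal projective resolution of $k$ pins down the degree of a polynomial generator of $\mathrm{H}^{\bullet}(G,k)/\mathrm{nil}$, and since $\mathrm{HH}^{\bullet}(B_0)\cong \mathrm{Ext}_{kG}^{\bullet}(k,B_0)$ is computed from that same resolution, a Poincar\'e-series comparison in high degree should force the generator of $\mathrm{HH}^{\bullet}(B_0)/\mathrm{nil}$ to lie in degree $d$ as well, closing the gap.
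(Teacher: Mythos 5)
For part (1), your proof is essentially the same as the paper's. The paper decomposes $B_0 = ke_0 \oplus I$ (where $I$ is the augmentation ideal of $B_0$) as $G$-modules, so that $\textup{H}^{\bullet}(G,B_0) \cong \textup{H}^{\bullet}(G,k) \oplus \textup{H}^{\bullet}(G,I)$ with the first summand a subalgebra and the second an ideal; under the hypothesis of (1), $I = J(B_0)$, and the nilpotence of $\textup{H}^{\bullet}(G,I)$ is shown by the same iterated-multiplication argument you cite. Your phrasing via the short exact sequence $0 \to J(B_0) \to B_0 \to k \to 0$ and split injectivity is a cosmetic variant of the same idea.

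Part (2) is where there is a genuine gap, and you recognize it yourself. The $\dim V_G = 0$ subcase is fine. But for $\dim V_G = 1$, you reduce to showing that a finite injection of reduced one-dimensional graded rings $\textup{H}^{\bullet}(G,k)/\mathrm{nil} \hookrightarrow \textup{HH}^{\bullet}(B_0)/\mathrm{nil}$ is an equality, and your proposed closing move is a Poincar\'e-series comparison. This does not obviously work. Periodicity of the minimal resolution of $k$ makes the Poincar\'e series of $\textup{HH}^{\bullet}(B_0) \cong \textup{Ext}^{\bullet}_{kG}(k,B_0)$ eventually periodic with the same period $d$, but it gives no control over how the dimensions split between the nilpotent ideal $\textup{H}^{\bullet}(G,I)$ and the reduced quotient; nothing in the series rules out the generator of $\textup{HH}^{\bullet}(B_0)/\mathrm{nil}$ sitting in a proper divisor of $d$ (so that $x \mapsto cy^m$ with $m>1$). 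Moreover you would first need to know $\textup{HH}^{\bullet}(B_0)/\mathrm{nil}$ is itself a polynomial ring in one variable, i.e.\ that $X_{B_0}$ is irreducible, which you have not established.

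The paper's route for $\dim V_G = 1$ is more concrete and avoids this entirely. It keeps the decomposition $\textup{H}^{\bullet}(G,B_0) = \textup{H}^{\bullet}(G,k) \oplus \textup{H}^{\bullet}(G,I)$ in play (valid for \emph{any} principal block, no hypothesis on simples), and shows $\textup{H}^{\bullet}(G,I)$ is a nil ideal directly: by periodicity $\Omega^{n}(k) \cong k$ for suitable $n$, so a power $\zeta^n$ of any homogeneous $\zeta \in \textup{H}^{\bullet}(G,I)$ is represented by a map $k \to I$, i.e.\ a $G$-fixed point of $I$; by Lemma \ref{center} fixed points of $B_0$ are exactly $Z(B_0)$, and since $Z(B_0)$ is local its intersection with $I$ consists of nilpotent elements, so further powers of $\zeta^n$ vanish. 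This use of Lemma \ref{center} is the essential ingredient your argument is missing. I would suggest abandoning the Poincar\'e-series plan and instead importing the $ke_0 \oplus I$ decomposition into case (2), then using periodicity and the fixed-point identification $B_0^G = Z(B_0)$ to kill $\textup{H}^{\bullet}(G,I)$ modulo nilpotents.
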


\begin{proof}
For the principal block $B_0$ of any finite group scheme $G$, we have $\text{H}^{\bullet}(G,B_0) \cong \text{H}^{\bullet}(G,k) \oplus \text{H}^{\bullet}(G,I)$, where $k$ is the 1-dimensional vector space of $B_0$ spanned by the central idempotent, and $I$ is the augmentation ideal of $B_0$.  In the above decomposition, $\text{H}^{\bullet}(G,k)$ is a subalgebra of $\text{H}^{\bullet}(G,B_0)$ and $\text{H}^{\bullet}(G,I)$ is an ideal.  Thus we are ultimately interested in proving that $\text{H}^{\bullet}(G,I)$ is nilpotent.

Under the assumption that the trivial module is the only simple $B_0$-module, we have that $I = \text{Rad}(B_0)$, and is therefore a nilpotent ideal.  Suppose now that $\zeta \in \text{H}^{\bullet}(G,I)$.  Then $\zeta^n$ is in the image of the map $H^{\bullet}(G,I^{\otimes n}) \rightarrow \text{H}^{\bullet}(G,I)$, induced by multiplication in $I$, and by the nilpotence of $I$ this map is $0$ for large enough $n$.

For the proof of (2), if dim $V_G = 0$, then this case is trivial since $B_0 \cong k$, hence both $\text{H}^{\bullet}(G,B_0)$ and $\text{HH}^{\bullet}(B_0)$ are isomorphic to $k$.  If $V_G = 1$, then it follows from \cite[5.10.4]{Ben2} (again the proof applying also to finite group schemes) that $\Omega^n(k) \cong k$ for some $n$.  Let $\zeta \in \text{H}^{2i}(G,I)$.  The element $\zeta^n$ is represented by a map from $\Omega^{2in}(k) \cong k$ to $I$.  Thus, $\zeta^n$ is represented by a map whose image lands both in the augmentation ideal $I$, and in the center of $B_0$ by lemma \ref{center}.  But the center of $B_0$ is local, and hence its intersection with the augmentation ideal is precisely the set of nilpotent central elements of $B_0$.  It follows that $\zeta^{nm} = 0$ for large enough $m$, and hence the ideal is nilpotent.
\end{proof}

\begin{cor}
If $B_0$ is the principal block of $kG$, and satisfies either of the hypotheses in the previous theorem, then $X_{B_0} \cong V_G$.
\end{cor}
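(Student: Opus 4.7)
The plan is to combine Proposition \ref{krull} with Theorem \ref{nilpotents}, together with one easy observation about principal blocks. First I would note that for the principal block $B_0$ one always has $V_G(\mathcal{B}_0) = V_G$: the trivial module $k$ lies in $B_0$ by definition of $B_0$, and since the map $\text{H}^{\bullet}(G,k) \to \text{H}^{\bullet}(G,\text{Hom}_k(k,k))$ is the identity we have $I_G(k) = 0$ and $V_G(k) = V_G$. Because $V_G(k) \subseteq V_G(\mathcal{B}_0) \subseteq V_G$, equality follows. Proposition \ref{krull} then supplies a finite surjective morphism of varieties $\pi \colon X_{B_0} \twoheadrightarrow V_G$.

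Next I would identify $\pi$ with the map on maximal ideal spectra induced by the algebra homomorphism $\text{H}^{\bullet}(G,k) \to \text{HH}^{\bullet}(B_0) \cong \text{H}^{\bullet}(G,B_0)$ analyzed in the proof of Theorem \ref{nilpotents}. Under the $G$-algebra splitting $B_0 \cong k \oplus I$, where $k$ is spanned by the central idempotent and $I$ is the augmentation ideal, this algebra map is precisely the inclusion of the first summand in the direct sum decomposition $\text{H}^{\bullet}(G,B_0) \cong \text{H}^{\bullet}(G,k) \oplus \text{H}^{\bullet}(G,I)$, with complementary ideal $\text{H}^{\bullet}(G,I)$. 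Theorem \ref{nilpotents} establishes, under either hypothesis, that $\text{H}^{\bullet}(G,I)$ is nilpotent.

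Hence the injective algebra map $\text{H}^{\bullet}(G,k) \hookrightarrow \text{HH}^{\bullet}(B_0)$ has nilpotent cokernel and therefore becomes an isomorphism after passing to reduced quotients. Since $\text{MaxSpec}$ factors through reduction modulo the nilradical, the induced morphism $\pi$ is an isomorphism of varieties, yielding $X_{B_0} \cong V_G$. I do not anticipate any serious obstacle: the only bookkeeping step is checking that the finite surjection of Proposition \ref{krull} agrees with the inclusion used in Theorem \ref{nilpotents}, which is immediate since both come from the same underlying algebra homomorphism $\text{H}^{\bullet}(G,k) \to \text{H}^{\bullet}(G,B_0)$.
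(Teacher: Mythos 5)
Your proof is correct, and it arrives at the same conclusion by the same underlying idea as the paper, which leaves the corollary unproved as an immediate consequence of Theorem \ref{nilpotents}. Since $\text{MaxSpec}$ of a commutative ring depends only on the ring modulo its nilradical, the isomorphism $\text{HH}^{\bullet}(B_0)/\sqrt{0} \cong \text{H}^{\bullet}(G,k)/\sqrt{0}$ from Theorem \ref{nilpotents} gives $X_{B_0} \cong V_G$ directly, with no need for the detour through $V_G(\mathcal{B}_0) = V_G$ or Proposition \ref{krull}. That said, your route is not wasted: by identifying the isomorphism with the finite surjection of Proposition \ref{krull} (both being induced by the same algebra map $\text{H}^{\bullet}(G,k) \to \text{H}^{\bullet}(G,B_0) \cong \text{HH}^{\bullet}(B_0)$), you obtain the mildly stronger conclusion that the specific morphism from Proposition \ref{krull} is an isomorphism, not merely that the two varieties are abstractly isomorphic. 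One small wording caution: ``nilpotent cokernel'' is loose since cokernels of ring maps are not rings, but your meaning is clear --- the complementary ideal $\text{H}^{\bullet}(G,I)$ in the splitting $\text{HH}^{\bullet}(B_0) \cong \text{H}^{\bullet}(G,k) \oplus \text{H}^{\bullet}(G,I)$ is nilpotent, and since the summand $\text{H}^{\bullet}(G,k)$ meets that ideal trivially, an element of $\text{H}^{\bullet}(G,k)$ becomes nilpotent in $\text{HH}^{\bullet}(B_0)$ if and only if it is already nilpotent in $\text{H}^{\bullet}(G,k)$, which is exactly what makes the induced map on reduced rings injective as well as surjective.
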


\bigskip
We now clarify the ramifications of a principal block having only one simple module, or equivalently, being a local $k$-algebra, as this hypothesis will again be used in the following section.  First we recall that a finite group scheme $H$ is called \textit{linearly reductive} if the group algebra $kH$ is semisimple.  We observe that if $U$ is a unipotent finite group scheme, then the principal block of $k(U \times H)$ is isomorphic as an algebra to $kU$, and thus is local.  It has been pointed out to us by the referee that the general case does not stray far from this example, as any $G$ which has a local principal block must be an extension of a unipotent finite group scheme by a linearly reductive group scheme.  The following proposition provides a proof of this claim, though it can also be deduced by applying results found in \cite{Far3} and \cite{FV} (as first observed by the referee).

\begin{prop}\label{localunipotent}
Let $B_0$ be the principal block of the finite group scheme $G$, and suppose that $B_0$ is local.  Then there is a linearly reductive normal subgroup scheme $N$ of $G$ such that $G/N$ is unipotent, and the canonical map $kG \rightarrow k(G/N)$ induces an algebra isomorphism $B_0 \cong k(G/N)$.
\end{prop}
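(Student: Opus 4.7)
The plan is to show that the natural algebra projection $\pi \colon kG \twoheadrightarrow B_0$, $x \mapsto xe_0$, is actually a Hopf algebra surjection, with $B_0$ inheriting a Hopf structure from $kG$.  Once this is established, $\pi$ is precisely the canonical map $kG \to k(G/N)$ for a unique closed normal subgroup scheme $N \subseteq G$ with $kN = (kG)^{\text{co}\pi}$, and the remaining assertions of the proposition follow directly.

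The key technical step is to verify the identity
$$\Delta(e_0)(e_0 \otimes e_0) = e_0 \otimes e_0$$
in $kG \otimes kG$; combined with $s(e_0)=e_0$ (which holds because $s(e_0)$ is a central idempotent with $\epsilon(s(e_0)) = \epsilon(e_0) = 1$, and $e_0$ is the unique such), this is what forces $\ker\pi = (1-e_0)kG$ to be a Hopf ideal: it is manifestly a two-sided ideal, it is antipode-stable, and it is a coideal because $(\pi \otimes \pi)\Delta(1-e_0) = 0$.  The identity itself is equivalent to the statement that $e_0$ acts as the identity on $B_0 \otimes B_0$ under the diagonal $G$-action via $\Delta$, i.e.\ that the $\Delta$-tensor product $B_0 \otimes B_0$ again lies in the block $B_0$.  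This is where the local hypothesis on $B_0$ enters: since the trivial module is the only simple $B_0$-module, every module in $B_0$ has only trivial composition factors, and a short filtration argument using $k \otimes V \cong V$ shows that this property is preserved under the $\Delta$-tensor product.  Hence $B_0$-mod is closed under $\otimes$ and the required identity holds.

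With $\pi$ established as a Hopf algebra surjection, let $N$ be the associated closed normal subgroup scheme, so that $k(G/N) \cong B_0$ via the canonical map.  Since $k(G/N)$ is local by hypothesis, and a finite group scheme is unipotent if and only if its group algebra is local, $G/N$ is unipotent.  To see that $N$ is linearly reductive, an analogous module-theoretic argument yields $\Delta(e_0)(1 \otimes e_0) = e_0 \otimes e_0$, which translates to $(\text{id} \otimes \pi)\Delta(e_0) = e_0 \otimes 1_{B_0}$, so $e_0 \in kN$.  Applying $\epsilon \otimes \text{id}$ to the coinvariance identity $(\text{id} \otimes \pi)\Delta(x) = x \otimes 1_{B_0}$ for any $x \in kN$ yields $xe_0 = \epsilon(x)e_0$, exhibiting $e_0$ as a right integral of $kN$ with $\epsilon(e_0) = 1 \neq 0$.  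Maschke's theorem for Hopf algebras (Larson--Sweedler) then implies $kN$ is semisimple, i.e.\ $N$ is linearly reductive.

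The hard part is the first step, establishing that $\pi$ is a Hopf algebra map; this genuinely requires the local hypothesis.  Without it, $\pi$ is typically not a Hopf algebra map---for instance, the principal block of $kA_5$ at $p=3$ has dimension $42$, which cannot be the order of any quotient group scheme of $A_5$.
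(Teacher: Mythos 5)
Your proposal is correct, and the core step---establishing that $\ker\pi$ is a Hopf ideal via $\Delta(e_0)(e_0\otimes e_0)=e_0\otimes e_0$---is in substance the same as the paper's, just dualized. The paper looks at $\Delta(e_k)$ for $k\neq 0$ and shows its $B_0\otimes B_0$-component vanishes by hitting it with $m\circ(\text{Id}\otimes\epsilon)$; you look at $\Delta(e_0)$ directly and deduce its $B_0\otimes B_0$-component is $e_0\otimes e_0$ by observing that $B_0\otimes B_0$ under the diagonal action has only trivial composition factors. These are two sides of the same coin (they add up to $\Delta(1)=1\otimes 1$), so there is no real difference there. Where you genuinely diverge is in proving that $N$ is linearly reductive. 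The paper's route: the kernel of $kG\twoheadrightarrow k(G/N)$ is $kG\cdot kN^{\dagger}$, which forces the image of $kN$ in $B_0$ to be one-dimensional, so $B_0$ restricted to $kN$ is simultaneously projective (Nichols--Zoeller) and a sum of trivial modules, whence $k$ is projective over $kN$. Your route: a second idempotent computation $\Delta(e_0)(1\otimes e_0)=e_0\otimes e_0$ (which you should spell out---it follows because for $i\neq 0$ the composition factors of $B_i\otimes B_0$ under $\Delta$ are the simples in $B_i$, none of which are $k$) places $e_0$ inside the coinvariant subalgebra $kN$, and then applying $\epsilon\otimes\text{id}$ to the coinvariance relation exhibits $e_0$ as a normalized integral of $kN$, so Maschke for Hopf algebras finishes it. Both arguments invoke a named Hopf-theoretic theorem (Nichols--Zoeller vs.\ Larson--Sweedler/Maschke), so neither is elementary, but your integral argument is somewhat more self-contained in that it avoids the explicit identification $\ker(kG\to k(G/N))=kG\cdot kN^{\dagger}$. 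One small inaccuracy: you should say $e_0$ is the unique \emph{primitive} central idempotent with $\epsilon=1$ (not unique among all central idempotents, since $\epsilon(1)=1$ too); since $s$ preserves primitivity this is enough.
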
 

\begin{proof}
If the trivial module is the only simple module lying in the principal block, then $e_0 \otimes e_0$ is the unique non-zero idempotent of $B_0 \otimes B_0$.  Suppose now that $kG \cong B_0 + \cdots + B_r$, and let $B_k, \; k \neq 0$, be a block with central idempotent $e_k$.  We know that $\Delta(e_k)$ is an idempotent in $kG \otimes kG$, an algebra whose blocks are of the form $B_i \otimes B_j$, and so we must have

$$\Delta(e_k) = \sum_{0 \leq i,j \leq r} x_{ij}, \; \quad x_{ij}^2 = x_{ij} \in B_i \otimes B_j$$

If $x_{00} = e_0 \otimes e_0$, then $(m \circ (\text{Id} \otimes \epsilon) \circ \Delta)(e_k) \neq e_k$, thus $x_{00} = 0$.  This then implies that $\Delta(B_k) \subseteq kG \otimes (B_1 + \cdots + B_r) + (B_1 + \cdots + B_r) \otimes kG$, so that $B_1 + \cdots + B_r$ is a Hopf ideal of $kG$.  Thus there is a normal subgroup scheme $N$ of $G$ for which the kernel of the canonical map $kG \rightarrow k(G/N)$ is $B_1 + \cdots + B_r$.  This clearly implies that $k(G/N) \cong B_0$ as an algebra, from which we deduce that $G/N$ is unipotent.

On the other hand, the kernel of the projection $kG \rightarrow k(G/N)$ is $kGkN^{\dagger}$, where $kN^{\dagger}$ denotes the augmentation ideal of $kN$.  It follows that $kN^{\dagger} \subseteq B_1 + \cdots + B_r$.  Thus the image of $kN$ in $B_0$ under the composite of maps $kN \hookrightarrow kG \twoheadrightarrow B_0$ is one-dimensional.  We then have that $B_0$, as a left-module over $kN$ in the usual way (that is, as the restriction to $kN$ of a summand of the left-regular representation of $kG$) is both projective and isomorphic to $dim_k(B_0)$ copies of the trivial module.  Thus the trivial module is projective as a $kN$-module, which implies that $kN$ is semisimple.

\end{proof}

\begin{remark}
It follows from the results of \cite[1.1]{Far3} that $N$ must in fact be equal to the subgroup $G_{\text{lr}}$, which is defined to be the unique largest linearly reductive normal subgroup scheme of $G$.
\end{remark}

\section{$p$-points and Block Invariants}

In the last section we made some observations about the relationship between $V_G(\mathcal{B})$ and $X_B$, the latter space defined intrinsically for $B$.  Similarly, in this section we will look at a space defined intrinsically for $B$ and compare it with $P(G)_{\mathcal{B}}$.  The following definition is a slight modification above that given in \cite{Far1}.

\begin{defn}
Let $B$ be a block of a finite group scheme $G$.  A \textit{flat-point} of $B$ is an algebra map
$$\alpha: k[t]/(t^p) \rightarrow B$$
such that $\alpha$ is left-flat (i.e. $\alpha^*(B)$ is a projective module).  Two flat-points $\alpha, \beta$ are said to be equivalent if for all $B$-modules M:
$$\alpha^*(M) \; \text{proj.} \iff \beta^*(M) \; \text{proj.}$$

We then set $F(B)$ to be the set of all equivalence classes $[\alpha]$ of flat-points of $B$ such that there is a finitely generated $B$-module $M$ with $\alpha^*(M)$ not projective.  We also define a topology on $F(B)$ by taking the smallest topology such that the sets $\{ [\alpha] \in F(B) | \alpha^*(M) \text{ is not projective} \}$ are closed, for all $M \in B$-mod.  
\end{defn}

\begin{remark}
It is clear that $F(B)$ is defined intrinsically for $B$.  By contrast, $P(G)_{\mathcal{B}}$ is not an invariant belonging to $B$ since the definition of a $p$-point involves the Hopf algebra structure of $kG$ (due to the factorization through unipotent abelian subgroup schemes).
\end{remark}

\begin{remark}
The definition in \cite{Far1} is stated for an arbitrary algebra $A$, and the space (denoted there as $Fl(A)$) is given as the set of \textit{all} equivalence classes of flat maps from $k[t]/(t^p)$ to $A$.  We have chosen our definition in such a way that a block which is simple as an algebra will always have $F(B) = \emptyset$ (for example, if $B$ is the block of the Steinberg module for $u(\mathfrak{sl}_2)$, then $Fl(B) = \{pt.\}$).  Also, the definition in \cite{Far1} does not specify a topology on the set of flat-points.
\end{remark}

If $B$ is a block of $kG$, then the projection map $\rho: kG \rightarrow B$ is flat, and thus we get a map from $p$-points of $G$ to flat-points of $B$ by composing with $\rho$.  Also, if $M$ is a $kG$-module which lies in $B$, then $M$ is a $B$-module whose structure as a $kG$-module is given by the pull-back functor $\rho^*$.  Thus if $\alpha$ is a $p$-point of $G$, then $\rho \circ \alpha$ is a flat-point of $B$, and the $k[t]/(t^p)$-modules $\alpha^*(M)$ and $(\rho \circ \alpha)^*(M)$ are the same, since the definition of $M$ as a $kG$-module effectively involved pull-back by $\rho$ in the first place.  We see then that the map $\rho_*: P(G) \rightarrow F(B)$ is well-defined on equivalence classes of $p$-points, as the equivalence relation for flat-points is defined in the exact same manner as for $p$-points, but only considers modules lying in $B$ rather than all $kG$-modules.  To distinguish between equivalence classes in $P(G)$ and $F(B)$, we will write $[\alpha]_G$ and $[\beta]_B$ respectively.

Restricting the above map to the support space of the block, we get a map $P(G)_{\mathcal{B}} \rightarrow F(B)$.  We now show that this map is both injective and continuous.

\begin{prop}\label{injective}
Let $B$ be a block of a finite group scheme $kG$, and let $\rho$ denote the projection $kG \rightarrow B$.  The map $\rho_*: P(G)_{\mathcal{B}} \rightarrow F(B)$, which sends $[\alpha]_G$ to $[\rho \circ \alpha]_B$, is injective and continuous.
\end{prop}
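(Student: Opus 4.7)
My plan is to handle continuity directly from the definitions of the two topologies, and then to deduce injectivity from Theorem \ref{p-points} by producing, for any two distinct classes, a separating $B$-module. For continuity, I would observe that the subbasic closed sets of $F(B)$ are the sets $\{[\gamma]_B : \gamma^*(M) \text{ is not projective}\}$ as $M$ ranges over $B$-modules. For such $M$, viewed also as a $kG$-module via $\rho$, the preimage under $\rho_*$ consists of those $[\alpha]_G \in P(G)_{\mathcal{B}}$ for which $\alpha^*(M) = (\rho \circ \alpha)^*(M)$ fails to be projective; this is exactly $P(G)_M \cap P(G)_{\mathcal{B}}$, closed in $P(G)_{\mathcal{B}}$. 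Since preimages of a subbasis of closed sets are closed, $\rho_*$ is continuous.

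For injectivity, the plan is contrapositive. Suppose $[\alpha]_G, [\beta]_G \in P(G)_{\mathcal{B}}$ are distinct. By Theorem \ref{p-points}, $a := \Psi([\alpha]_G)$ and $b := \Psi([\beta]_G)$ are distinct points of $\text{Proj } V_G(\mathcal{B})$, so there is a homogeneous $\zeta \in \text{H}^{\bullet}(G,k)$ vanishing at exactly one, say $\zeta(a)=0$ and $\zeta(b) \neq 0$. My target is a single $B$-module $M$ with $a \in V_G(M)$ and $b \notin V_G(M)$, which forces $\alpha^*(M)$ non-projective and $\beta^*(M)$ projective, contradicting $[\rho \circ \alpha]_B = [\rho \circ \beta]_B$. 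I would construct $M$ as $L_\zeta \otimes S_i$, where $L_\zeta$ is a Carlson module lying in the principal block $B_0$ (so $V_G(L_\zeta) = V(\zeta)$), and $S_i$ is a simple $B$-module chosen so that $a \in V_G(S_i)$ — such $S_i$ exists because $a \in V_G(\mathcal{B}) = \bigcup_i V_G(S_i)$. The tensor product formula for support varieties then gives $V_G(M) = V(\zeta) \cap V_G(S_i)$, which contains $a$ but not $b$.

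The main hurdle I anticipate is verifying that $M = L_\zeta \otimes S_i$ actually lies in the block $B$, i.e.\ the Hopf-algebraic statement that tensoring with a module in $B_0$ preserves block components. I plan to check this by examining $\Delta(e_j)$ in $kG \otimes kG$, expanding it against the central idempotents $e_a \otimes e_b$ of the tensor product, and applying the counit axiom $(\epsilon \otimes \text{id}) \circ \Delta = \text{id}$ together with $\epsilon(e_a) = \delta_{a,0}$ to pin down which summands can appear. This should show that $e_j$ acts as the identity on $L_\zeta \otimes S_i$ when $S_i$ lies in $B_j$, so that $M$ lies in $B$ and the separation argument closes. If this turns out to be awkward in the non-principal case, a fallback is to work directly with the Carlson-style construction inside $B$ using $\text{HH}^{\bullet}(B) \cong \text{H}^{\bullet}(G,B)$ established in the previous section, but the tensor-product route above appears to be the cleanest.
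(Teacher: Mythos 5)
Your continuity argument is correct and is exactly the paper's. Your injectivity argument begins along the right lines --- separate $[\alpha]_G$ and $[\beta]_G$ by a cohomology class $\zeta$ and pass to the Carlson module $L_\zeta$ tensored with a simple module lying in $B$ --- but it stalls precisely at the step you flag as the ``main hurdle,'' and it stalls because the claim you hope to prove there is false.

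The assertion that tensoring with a module in $B_0$ preserves block components does not hold for general finite group schemes, nor even for finite groups. The counit axiom gives you only that $(\epsilon \otimes \mathrm{Id})$ applied to the $B_0 \otimes B_j$ component of $\Delta(e_j)$ equals $e_j$; this pins that component down modulo the kernel of $(\epsilon \otimes \mathrm{Id})|_{B_0 \otimes B_j}$, which is $I_0 \otimes B_j$ with $I_0$ the augmentation ideal of $B_0$. Unless $B_0$ is local, $I_0$ is not nilpotent and contains nonzero idempotents after tensoring, so you cannot conclude that the component is $e_0 \otimes e_j$. And indeed there are counterexamples: for $G = A_5$ in characteristic $2$, with $B_0$ containing the simples $k, 2a, 2b$ and $B_1 = \mathrm{Mat}_4(k)$ containing the projective simple $4$, a Brauer character computation shows $2a \otimes 4$ has composition factors $2k \oplus 2a \oplus 2\cdot 2b$, all lying in $B_0$, so $2a \otimes 4$ lies in $B_0$ rather than $B_1$. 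So $L_\zeta \otimes S_i$ need not lie in $B$, and your separating module is not in general a $B$-module, which is what the argument requires.

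The paper's proof fixes exactly this: instead of claiming $L_\zeta \otimes M$ lies in $B$ (with $M = \bigoplus S_i$), it decomposes $L_\zeta \otimes M$ into $e.(L_\zeta \otimes M) \oplus (1-e).(L_\zeta \otimes M)$ and invokes Farnsteiner's observation (from the proof of \cite[2.2]{Far2}) that since $M$ lies in $B$, the component $(1-e).(L_\zeta \otimes M)$ is \emph{projective}. Projectivity of the non-$B$ part is the correct weakening: it means all of the support of $L_\zeta \otimes M$ is concentrated in the genuine $B$-module $e.(L_\zeta \otimes M)$, which then does the separating. This is the missing ingredient. Your fallback idea of running a Carlson-style construction through $\mathrm{HH}^{\bullet}(B)$ is left too vague to assess, but the clean repair is to replace ``$L_\zeta \otimes S_i$ lies in $B$'' with ``$(1-e).(L_\zeta \otimes S_i)$ is projective'' and supply the argument from \cite{Far2}.
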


\begin{proof}
Suppose that $[\alpha]_G, [\beta]_G \in P(G)_{\mathcal{B}}$, with $[\alpha]_G \ne [\beta]_G$.  By \cite[5.1]{FP1}, the inequivalence of $\alpha$ and $\beta$ as $p$-points implies that there exists a cohomology class $\zeta$ in degree $2n$, for some $n$, such that the Carlson module $L_{\zeta}$ restricts via $\alpha$ to a non-projective $k[t]/(t^p)$-module, while the restriction via $\beta$ is projective.  In other words, $[\alpha]_G \in P(G)_{L_{\zeta}}$ and $[\beta]_G \not \in P(G)_{L_{\zeta}}$.

Let $M = \bigoplus S_i$, for all simple modules $S_i$ lying in $B$, so that $P(G)_{\mathcal{B}} = P(G)_M$.  By \cite[5.6]{FP1}, we have

\begin{equation}
\label{*}
[\alpha]_G \in P(G)_{L_{\zeta} \otimes M}, \; [\beta]_G \not \in P(G)_{L_{\zeta} \otimes M}
\end{equation}

Let $e$ be the central idempotent of $B$.  The module $L_{\zeta} \otimes M$ can be decomposed into the sum of $e.(L_{\zeta} \otimes M)$ and $(1-e).(L_{\zeta} \otimes M)$.  As shown in the proof of \cite[2.2]{Far2}, since $M$ is a $B$-module, $(1-e).(L_{\zeta} \otimes M)$ is projective, hence all of the support of $L_{\zeta} \otimes M$ comes from $e.(L_{\zeta} \otimes M)$.  With this observation, (\ref{*}) then can be restated as:

$$[\alpha]_G \in P(G)_{e.(L_{\zeta} \otimes M)}, \; [\beta]_G \not \in P(G)_{e.(L_{\zeta} \otimes M)}$$

Since $e.(L_{\zeta} \otimes M)$ lies in $B$, then as observed above the action of $kG$ factors through $B$ so that $\alpha^*(e.(L_{\zeta} \otimes M)) = (\rho \circ \alpha)^*(e.(L_{\zeta} \otimes M))$, and so we have $[\rho \circ \alpha]_B \ne [\rho \circ \beta]_B$. 

\bigskip
As for the continuity of $\rho_*$, for a $B$-module $M$ we see that $(\rho_*)^{-1}(F(B)_M) = P(G)_M$, which is a closed set in $P(G)_{\mathcal{B}}$.  Since the topology on $F(B)$ is specified as being the smallest such that the $F(B)_M$ are closed sets, the continuity of $\rho_*$ follows.
\end{proof}

We will show that, in analogy with Theorem \ref{nilpotents}(1), for principal blocks having a single simple module, the map $\rho_*: P(G)_{\mathcal{B}_0} \rightarrow F(B_0)$ is a homeomorphism.  We first recall the following lemma which will be useful in the next theorem.

\begin{lemma}\label{kernel} (\cite{FP1}, \cite{Far1})
Let $\alpha$ be any flat map from $k[t]/(t^p) \rightarrow kG$.  Then the image of the induced map $\alpha^{\bullet}: \textup{H}^{\bullet}(G,k) \rightarrow \textup{H}^{\bullet}(k[t]/(t^p),k)$ is not contained in $\textup{H}^0(k[t]/(t^p),k)$.
\end{lemma}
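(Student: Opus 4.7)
The plan is to argue by contradiction: assume the image of $\alpha^{\bullet}$ lies entirely in $\textup{H}^0(k[t]/(t^p),k) = k$, so that every positive-degree class of $\textup{H}^{\bullet}(G,k)$ pulls back to zero. I would combine the Eckmann-Shapiro Lemma with the Friedlander-Suslin finite generation theorem \cite[1.1]{FS} to force the infinite-dimensional graded algebra $\textup{H}^{\bullet}(k[t]/(t^p),k)$ to be finite-dimensional over $k$, which is absurd.

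First I would observe that $\alpha(t) \in \ker \epsilon$: since $\alpha(t)^p = \alpha(t^p) = 0$, applying $\epsilon$ gives $\epsilon(\alpha(t))^p = 0$ in $k$, hence $\epsilon(\alpha(t)) = 0$. Consequently the restriction $\alpha^*(k)$ of the trivial $kG$-module is the trivial $k[t]/(t^p)$-module. Flatness of $\alpha$ makes $kG$ free over $k[t]/(t^p)$, so Eckmann-Shapiro yields
\[
\textup{H}^{\bullet}(k[t]/(t^p),k) \;\cong\; \textup{Ext}^{\bullet}_{kG}\bigl(kG \otimes_{k[t]/(t^p)} k,\, k\bigr) \;=\; \textup{Ext}^{\bullet}_{kG}\bigl(kG/kG\alpha(t),\, k\bigr).
\]
Tracing through the adjunction, $\alpha^{\bullet}$ corresponds under this isomorphism to the map on $\textup{Ext}^{\bullet}_{kG}(-,k)$ induced by the counit surjection $kG/kG\alpha(t) \twoheadrightarrow k$, and a naturality check identifies the natural $\textup{H}^{\bullet}(G,k)$-action on $\textup{Ext}^{\bullet}_{kG}(kG/kG\alpha(t),k)$ via Yoneda composition with the $\textup{H}^{\bullet}(G,k)$-action on $\textup{H}^{\bullet}(k[t]/(t^p),k)$ in which $\zeta$ acts as multiplication by $\alpha^{\bullet}(\zeta)$.

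Next I would apply \cite[1.1]{FS}: since $kG/kG\alpha(t)$ is finite-dimensional, the module $\textup{Ext}^{\bullet}_{kG}(kG/kG\alpha(t),k)$ is finitely generated over $\textup{H}^{\bullet}(G,k)$. Transporting via the identifications above, $\textup{H}^{\bullet}(k[t]/(t^p),k)$ is then finitely generated over the image of $\alpha^{\bullet}$. Under the contradiction hypothesis this image equals $k$, so $\textup{H}^{\bullet}(k[t]/(t^p),k)$ would be finitely generated as a $k$-module, hence finite-dimensional. This contradicts the fact that $\textup{H}^{\bullet}(k[t]/(t^p),k)$ is a polynomial ring in one variable.

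The main obstacle is the compatibility claim in the second paragraph: verifying that the Yoneda action of $\textup{H}^{\bullet}(G,k)$ on $\textup{Ext}^{\bullet}_{kG}(kG/kG\alpha(t),k)$ matches, under Eckmann-Shapiro, the $\alpha^{\bullet}$-twisted multiplicative action on $\textup{H}^{\bullet}(k[t]/(t^p),k)$. This is a standard naturality statement for restriction-induction adjunctions and cup products, but it is the step that converts the abstract finite generation into the desired conclusion about the image of $\alpha^{\bullet}$.
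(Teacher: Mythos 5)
The paper does not give its own proof of Lemma \ref{kernel}: it simply cites \cite{FP1} and \cite{Far1}. Your argument---apply Eckmann--Shapiro to identify $\text{H}^{\bullet}(k[t]/(t^p),k)$ with the $\text{Ext}^{\bullet}_{kG}$ of a (co)induced module, invoke the Friedlander--Suslin finite generation theorem, and derive a contradiction from the fact that $\text{H}^{\bullet}(k[t]/(t^p),k)$ is a polynomial ring in one variable---is exactly the standard argument found in those references, and it is correct. The compatibility claim you flag (that the $\text{H}^{\bullet}(G,k)$-action transports under Eckmann--Shapiro to multiplication via $\alpha^{\bullet}$) is indeed the load-bearing naturality statement, but it is routine.

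One small point worth spelling out: you form the induced module $kG \otimes_{k[t]/(t^p)} k = kG/kG\alpha(t)$, and the Eckmann--Shapiro isomorphism you invoke requires $kG$ to be flat (equivalently free, as $k[t]/(t^p)$ is local) as a \emph{right} $k[t]/(t^p)$-module under $x \cdot r = x\alpha(r)$, whereas the flatness hypothesis is stated for $\alpha^*(kG)$ as a \emph{left} module. This mismatch is harmless: either invoke the fact that for subalgebras of a Frobenius algebra left- and right-projectivity coincide, or---more in keeping with the conventions used later in Section 4 of the paper (see the proof of Theorem \ref{equiv})---replace induction by the coinduced module $\text{Hom}_{\alpha(k[t]/(t^p))}(kG,k)$ and use the coinduction form of Eckmann--Shapiro, which uses left-projectivity directly and yields the same conclusion.
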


We will now prove that any flat map to a unipotent finite group scheme is equivalent to one which factors through an abelian subgroup scheme.  Let us first though observe that such a statement is not completely trivial, by showing that there do exist flat maps to unipotent finite group schemes which do not factor through abelian subgroup schemes.

\begin{ex}
Let $G$ be a non-abelian $p$-group, and choose $g_1 \in Z(G)$ such that $|g_1| = p$.  Set

$$x= 1-g_1, \;\; N = \sum_{g \in G}g$$

The element $x+N$ is p-nilpotent, and is not contained in any subgroup algebra.  We know that the map sending $t$ to $x$ makes $kG$ into a free $k[t]/(t^p)$-module.  Since $xN = 0$, there is some element $y \in kG$ such that $N = x^{p-1}y$.  It then follows by \cite[2.2]{FP1} that the map sending $t$ to $x+N = x+x(x^{p-2}y)$ also determines a left-flat map from $k[t]/(t^p)$ to $kG$, and thus is a flat map which is not a $p$-point.
\end{ex}

\begin{thm} \label{equiv}
If $G$ is a unipotent finite group scheme, then every flat map $k[t]/(t^p) \rightarrow kG$ is equivalent to a $p$-point of $G$.
\end{thm}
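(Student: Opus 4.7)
The plan is to construct an equivalent $p$-point $\beta$ from the cohomological data attached to $\alpha$, and then verify equivalence by reducing to Carlson modules. First, consider the induced graded ring homomorphism $\alpha^{\bullet} \colon \text{H}^{\bullet}(G,k) \to \text{H}^{\bullet}(k[t]/(t^p),k)$. Since the target is a polynomial ring (the even cohomology of $k[t]/(t^p)$ is isomorphic to $k[\eta]$, in particular a domain), the ideal $\ker \alpha^{\bullet}$ is homogeneous prime, and by Lemma \ref{kernel} it is proper. Thus it defines a point $\mathfrak{p}_{\alpha} \in \text{Proj}\,\text{H}^{\bullet}(G,k)$, and by Theorem \ref{p-points} I can select a $p$-point $\beta$ of $G$ with $\Psi([\beta]_G) = \mathfrak{p}_{\alpha}$, i.e., $\ker \beta^{\bullet} = \ker \alpha^{\bullet}$.

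Next, I claim $\alpha \sim \beta$. The baseline step is that for any flat map $\gamma \colon k[t]/(t^p) \to kG$ (whether a $p$-point or not) and any homogeneous $\zeta \in \text{H}^{2n}(G,k)$, applying $\gamma^{*}$ to the defining short exact sequence $0 \to L_{\zeta} \to \Omega^{2n}(k) \to k \to 0$ and using that $\gamma^{*}$ preserves projectives yields that $\gamma^{*}(L_{\zeta})$ is projective if and only if $\gamma^{\bullet}(\zeta) \neq 0$. Since $\ker \alpha^{\bullet} = \ker \beta^{\bullet}$, this immediately gives agreement on the projectivity of every Carlson module.

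The hardest step, and the one which genuinely uses unipotence of $G$, is to bootstrap from agreement on Carlson modules to agreement on arbitrary $M \in kG$-mod. My plan is to establish, for any flat map $\gamma$ and any $M$, the characterization that $\gamma^{*}(M)$ is non-projective if and only if $\ker \gamma^{\bullet} \supseteq I_G(M)$. The forward implication is cohomological: if $\gamma^{*}(M)$ is non-projective then $\eta$ acts non-nilpotently on $\text{Ext}^{*}_{k[t]/(t^p)}(\gamma^{*}(M),\gamma^{*}(M))$, which via naturality of the cup product action forces $\gamma^{\bullet}(\zeta) = 0$ for every $\zeta \in I_G(M)$.

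The converse is the main obstacle: starting from $\ker \gamma^{\bullet} \supseteq I_G(M)$, one picks (by Theorem \ref{p-points}) a $p$-point $\alpha'$ at the cohomological point $\mathfrak{p}_{\gamma}$ for which $(\alpha')^{*}(M)$ is non-projective, and must transfer this non-projectivity to $\gamma$. Here unipotence is essential: because $kG$ is local with $k$ the unique simple module, every $kG$-module lies in the principal block, and one can proceed by induction on $\dim V_G(M)$, using the tensor-product formula $V_G(L_{\zeta}\otimes M) = V(\zeta) \cap V_G(M)$ to strictly decrease the dimension at each stage (by choosing $\zeta$ with $\zeta \notin \ker \gamma^{\bullet}$ but $\zeta \in I_G(M')$ for an appropriate summand $M'$), thereby reducing to the Carlson-module baseline already verified. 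Granted this characterization, the equality $\ker \alpha^{\bullet} = \ker \beta^{\bullet}$ forces $\alpha^{*}(M)$ projective if and only if $\beta^{*}(M)$ projective for every $M$, so $\alpha \sim \beta$ and the theorem follows.
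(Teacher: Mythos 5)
Your setup (choosing a $p$-point $\beta$ with $\ker\beta^{\bullet}=\ker\alpha^{\bullet}$ via Lemma \ref{kernel} and Theorem \ref{p-points}), your forward implication ($\gamma^*(M)$ non-projective $\Rightarrow$ $I_G(M)\subseteq\ker\gamma^{\bullet}$, via the cup-product action and non-nilpotence of $\eta$), and the Carlson-module baseline are all sound, and the characterization you are aiming for (``$\gamma^*(M)$ is non-projective iff $\ker\gamma^{\bullet}\supseteq I_G(M)$'') is in fact true. But the proposed proof of the converse direction --- the step you rightly call the main obstacle --- has a genuine gap.

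The induction on $\dim V_G(M)$ via tensoring with Carlson modules implicitly requires comparing $\gamma^*(L_{\zeta}\otimes M)$ with $\gamma^*(L_{\zeta})$ and $\gamma^*(M)$, so that shrinking $V_G(M)$ by a hyperplane transfers information about projectivity of restrictions. This works for $p$-points precisely because a $p$-point factors through an abelian subgroup scheme, making the restriction functor compatible with tensor products. For a general flat map $\gamma$ this compatibility fails: $\gamma$ is only an algebra map, so $\Delta_{kG}(\gamma(t))$ has no reason to equal $(\gamma\otimes\gamma)(\Delta_{k[t]/(t^p)}(t))$, and $\gamma^*(L_{\zeta}\otimes M)$ is not $\gamma^*(L_{\zeta})\otimes\gamma^*(M)$. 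In other words, the tensor-product formula lives in $kG$-mod and cannot simply be pulled back along $\gamma$. This is exactly what the word ``abelian'' in the definition of a $p$-point is there to guarantee, and removing it is the content of the theorem, so you cannot use it in the proof. The paper avoids this by moving the tensor product \emph{into} $kG$-mod: it coinduces $k\!\Uparrow_{\gamma}^{G}=\mathrm{Hom}_{\gamma(k[t]/(t^p))}(kG,k)$, observes via Eckmann--Shapiro that $V_G(k\!\Uparrow_{\gamma}^{G})$ is controlled by $\ker\gamma^{\bullet}$, applies the tensor-product formula to $k\!\Uparrow_{\gamma}^{G}\otimes M^*$ inside $kG$-mod, and then uses unipotence of \emph{both} $kG$ and $k[t]/(t^p)$ (so $k$ is the unique simple in each) to convert the Ext computation $\mathrm{Ext}^n_{kG}(k,k\!\Uparrow_{\gamma}^{G}\otimes M^*)\cong\mathrm{Ext}^n_{k[t]/(t^p)}(\gamma^*(M),k)$ into a projectivity test for $\gamma^*(M)$. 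You will need some device of this kind to replace the failing induction.
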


\begin{proof}
Let $\alpha, \beta$ be any two flat maps such that the maps they induce in cohomology have the same kernel.  That is, $\text{ker} \; \alpha^{\bullet} = \text{ker} \; \beta^{\bullet}$.  Since $k$ is the only simple $kG$-module, then for any module $M$, we can calculate $V_G(M)$ according to the annihilator of $\text{Ext}^*_{kG}(k,M)$.  Now consider the module $k\!\Uparrow_{\alpha}^G \;:= \text{Hom}_{\alpha(k[t]/(t^p))}(kG,k)$ (i.e. the coinduced or induced module, depending on terminology, from the subalgebra of $kG$ that is the image of $k[t]/(t^p)$ under $\alpha$).  The identification of $n$-fold extensions under the Eckmann-Shapiro isomorphism (see \cite[2.8]{Ben1}), shows that the action of $H^{\bullet}(G,k)$ on $\text{Ext}^*_{kG}(k,k\!\!\Uparrow_{\alpha}^G)$ via the Yoneda product is the same as is given by $H^{\bullet}(G,k)$ acting on $\text{Ext}^*_{k[t]/(t^p)}(k,k)$ via $\alpha^{\bullet}$ followed by Yoneda product.  Thus, $\text{ker} \; \alpha^{\bullet} = \text{ker} \; \beta^{\bullet}$ implies that $V_G(k\Uparrow_{\alpha}^G) =V_G(k\Uparrow_{\beta}^G)$.  If $M$ is any $kG$-module, we then have that $V_G(k\Uparrow_{\alpha}^G \otimes M^*) =V_G(k\Uparrow_{\beta}^G \otimes M^*)$, so that in particular:

$$(k\!\Uparrow_{\alpha}^G \otimes M^*) \; \text{proj.} \; \iff (k\!\Uparrow_{\beta}^G \otimes M^*) \; \text{proj.}$$

\bigskip
\noindent We have the chain of isomorphisms:

\vspace{-0.1 in}
$$\text{Ext}^n_{kG}(k,k\!\Uparrow_{\alpha}^G \otimes M^*) \cong \text{Ext}^n_{kG}(M,k\!\Uparrow_{\alpha}^G) \cong \text{Ext}^n_{k[t]/(t^p)}(\alpha^*(M),k)$$

Since both $kG$ and $k[t]/(t^p)$ have $k$ as their only simple module, then in both module categories any non-projective module must have a non-trivial $n$-fold extension by $k$ for all $n$.  Thus

\vspace{-0.1 in}
$$\alpha^*(M) \text{ proj. } \iff (k \!\Uparrow_{\alpha}^G \otimes M^*) \text{ proj. } \iff (k \!\Uparrow_{\beta}^G \otimes M^*) \text{ proj. } \iff \beta^*(M) \text{ proj. }$$

This proves that flat maps from $k[t]/(t^p)$ to $kG$ inducing the same kernel in cohomology are equivalent.  Finally, we observe that if $\alpha$ is flat, then by Lemma \ref{kernel} we have that $\text{ker} \; \alpha^{\bullet}$ is a non-trivial maximal homogeneous ideal of $\text{H}^{\bullet}(G,k)$.  It follows from Theorem \ref{p-points} (and the definition of the map $\Psi$, see \cite[2.8]{FP1}) that there is a $p$-point $\beta$ such that $\text{ker} \; \beta^{\bullet} = \text{ker} \; \alpha^{\bullet}$, which completes the proof.
\end{proof}

\begin{remark}
This shows that in terms of giving a representation-theoretic description of support varieties, the definition of a $p$-point could simply be that it is a left-flat map factoring through a unipotent subgroup scheme.  However, the creation of $p$-points has led to new invariants for modules, and it is unclear if these invariants would work with an altered definition as suggested above.
\end{remark}

Having established the previous result, we can now prove that for principal blocks which are local, there is a homeomorphism $P(G)_{\mathcal{B}_0} \cong F(B_0)$, which, as mentioned earlier, provides a nice symmetry with Theorem \ref{nilpotents}.

\begin{prop}
Let $G$ be a finite group scheme, and suppose that the trivial module is the only simple module lying in the principal block $B_0$.  Then the map ${\rho_0}_*: P(G)_{\mathcal{B}_0} \rightarrow F(B_0)$ is a homeomorphism.
\end{prop}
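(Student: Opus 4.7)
The strategy is to identify $\rho_{0*}$ as the bijection of representation-theoretic support spaces induced by the inflation isomorphism on cohomology coming from the quotient $G \to G/N$ furnished by Proposition \ref{localunipotent}. First, since $k$ is the unique simple $B_0$-module and $\alpha^*(k)$ is the trivial (and hence non-projective) $k[t]/(t^p)$-module for every $p$-point $\alpha$, we have $P(G)_{\mathcal{B}_0} = P(G)$. Proposition \ref{injective} already supplies injectivity and continuity, so the remaining tasks are surjectivity and continuity of the inverse.

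Next, apply Proposition \ref{localunipotent} to obtain a linearly reductive normal subgroup scheme $N$ of $G$ with $G/N$ unipotent, so that $\rho_0$ is identified with the canonical quotient $kG \twoheadrightarrow k(G/N)$ via the algebra isomorphism $B_0 \cong k(G/N)$.  Because $kN$ is semisimple, $\text{H}^q(N,k) = 0$ for all $q > 0$, and the Lyndon--Hochschild--Serre spectral sequence for $N \trianglelefteq G$ collapses to yield an isomorphism of graded algebras
$$\rho_0^{\bullet}: \text{H}^{\bullet}(G/N,k) \stackrel{\sim}{\rightarrow} \text{H}^{\bullet}(G,k).$$
Theorem \ref{equiv} applied to the unipotent group scheme $G/N$ shows that every flat-point of $B_0 \cong k(G/N)$ is equivalent to a $p$-point of $G/N$, and detects this equivalence by the kernel in $\text{H}^{\bullet}(G/N,k)$. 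Combined with Theorem \ref{p-points} (applied to both $G/N$ and $G$), this produces homeomorphisms $F(B_0) \cong \text{Proj H}^{\bullet}(G/N,k)$ and $P(G) \cong \text{Proj H}^{\bullet}(G,k)$. Using naturality, $(\rho_0 \circ \alpha)^{\bullet} = \alpha^{\bullet} \circ \rho_0^{\bullet}$, one verifies that $\rho_{0*}$ corresponds under these identifications to the induced variety isomorphism $\text{Proj}(\rho_0^{\bullet})$, and is therefore a homeomorphism.

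The main step to get right is surjectivity: given $[\alpha]_{B_0} \in F(B_0)$, apply Theorem \ref{p-points} to find a $p$-point $\beta$ of $G$ with $\ker \beta^{\bullet} = \rho_0^{\bullet}(\ker \alpha^{\bullet})$; then $\ker(\rho_0 \circ \beta)^{\bullet} = (\rho_0^{\bullet})^{-1}(\ker \beta^{\bullet}) = \ker \alpha^{\bullet}$, so Theorem \ref{equiv} gives $\rho_0 \circ \beta \sim \alpha$. Continuity of the inverse reduces to checking that the topology on $F(B_0)$ generated by the closed sets $F(B_0)_M$ agrees with the Zariski topology transported from $\text{Proj H}^{\bullet}(G/N,k)$, which holds because the sets $P(G/N)_M$ and $F(B_0)_M$ in the two topologies are defined by the same projectivity condition on $B_0$-modules.
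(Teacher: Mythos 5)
Your proposal is correct and follows essentially the same route as the paper: reduce via Proposition \ref{localunipotent} to a unipotent quotient $G/N$ with $B_0 \cong k(G/N)$, use the isomorphism $\rho_0^\bullet$ on cohomology to produce from a flat-point $\alpha$ of $B_0$ a $p$-point $\beta$ of $G$ with matching kernel (via Theorem \ref{p-points}), and then invoke Theorem \ref{equiv} to conclude $\rho_0 \circ \beta \sim \alpha$. Two minor differences: you derive the isomorphism $\rho_0^\bullet$ by a Lyndon--Hochschild--Serre collapse, whereas the paper simply uses the block-theoretic fact that $\operatorname{Ext}^*_{kG}(k,k) = \operatorname{Ext}^*_{B_0}(k,k)$ because $k$ lies in $B_0$; both are valid. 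You also add an explicit check that the inverse is continuous (by matching closed sets $P(G)_M$ with $F(B_0)_M$ for $B_0$-modules $M$), a point the paper leaves implicit after establishing the bijection and one-sided continuity from Proposition \ref{injective}, so your version is slightly more complete on that front.
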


\begin{proof}
Let $\alpha$ be a flat-point of $B_0$.  We have a map $\alpha^{\bullet}: \text{H}^{\bullet}(B_0,k) \rightarrow \text{H}^{\bullet}(k[t]/(t^p),k)$.  By  Proposition \ref{localunipotent}, $B_0$ is isomorphic as an algebra to the group algebra of a unipotent group scheme.   We can thus apply Lemma \ref{kernel} to see that $\text{ker} \; \alpha^{\bullet}$ is not the augmentation ideal of $\text{H}^{\bullet}(B_0,k)$.  We also have that $\rho_0$ induces an isomorphism $\rho_0^{\bullet}: \text{H}^{\bullet}(B_0,k) \stackrel{\sim}{\rightarrow} \text{H}^{\bullet}(G,k)$, thus $\rho_0^{\bullet}(\text{ker} \; \alpha_0^{\bullet})$ is not the augmentation ideal $\text{H}^{\bullet}(G,k)$.  By the same reasoning used towards the end of the proof of Theorem \ref{equiv}, there is a $p$-point $\beta$ of $kG$ such that $\beta^{\bullet}: \text{H}^{\bullet}(G,k) \rightarrow \text{H}^{\bullet}(k[t]/(t^p),k)$ satisfies

$$\text{ker} \; \alpha^{\bullet} = \text{ker} \; (\beta^{\bullet} \circ \rho_0^{\bullet})$$

Thus, $\alpha$ and ($\rho_0 \circ \beta$) are two flat-points of $B_0$ inducing the same kernel in cohomology, and since $B_0$ is isomorphic to the group algebra of a unipotent group scheme, the equivalence of $\alpha$ and $\rho_0 \circ \beta$ follows from Theorem \ref{equiv}.  Hence ${\rho_0}_*([\beta]_G) = [\alpha]_{B_0}$, and as $\alpha$ was arbitrary, ${\rho_0}_*$ is surjective.  
\end{proof}

\vspace{.2 in}
\noindent Paul Sobaje\\
Department of Mathematics \& Statistics\\
The University of Melbourne\\
Parkville, VIC, 3010\\
AUSTRALIA\\
paul.sobaje@unimelb.edu.au

\end{document}